\newtheorem{thm}{Theorem}
\newtheorem{lem}[thm]{Lemma}
\newtheorem{cor}[thm]{Corollary}
\numberwithin{thm}{section}
\numberwithin{equation}{section}
\theoremstyle{definition}
\newtheorem*{ex}{Example}
\newcommand{\rat}{\mathbb Q}
\newcommand{\com}{\mathbb C}
\newcommand{\alg}{\overline\rat}
\newcommand{\algt}{\alg^{\times}}
\newcommand{\intg}{\mathbb Z}
\newcommand{\nat}{\mathbb N}
\newcommand{\B}{\mathcal B}
\newcommand{\tor}{\mathrm{Tor}}
\newcommand{\rad}{\mathrm{Rad}}
\newcommand{\spa}{\mathrm{Span}}
\title{The finiteness of computing the ultrametric Mahler measure}
\author[C.L. Samuels]{Charles L. Samuels}
\address{The University of British Columbia, Department of Mathematics, 1984 Mathematics Road, Vancouver, BC V6T 1Z2, Canada}
\email{csamuels@math.ubc.ca}
\subjclass[2000]{Primary 11R04}
\keywords{Weil height, Mahler measure, metric Mahler measure, Lehmer's problem}
\begin{document}

\begin{abstract}
	Recent work of Fili and the author examines an ultrametric version of the Mahler measure, denoted $M_\infty(\alpha)$ for an algebraic number $\alpha$.  
	We show that the computation of $M_\infty(\alpha)$ can be reduced to a certain search through a finite set.
  Although it is a open problem to record the points of this set in general, 
	we provide some examples where it is reasonable to compute and our result can be used to determine $M_\infty(\alpha)$.
\end{abstract}

\maketitle

\section{Introduction and Notation}

Let $K$ be a number field and $v$ a place of $K$ dividing the place $p$ of $\rat$.  Let $K_v$ and $\rat_p$
denote the respective completions.  We write $\|\cdot\|_v$ for the unique absolute value on $K_v$ 
extending the $p$-adic absolute value on $\rat_p$ and define 
\begin{equation*}
  |\alpha|_v=\|\alpha \|_v^{[K_v:\rat_p]/[K:\rat]}
\end{equation*}
for all $\alpha\in K$.  Define the {\it Weil height} of $\alpha\in K$ by
\begin{equation*} \label{WeilHeightDef}
  H(\alpha) = \prod_v\max\{1,|\alpha|_v\}
\end{equation*}
where the product is taken over all places $v$ of $K$.  Given this normalization of our absolute values,
the above definition does not depend on $K$, and therefore, $H$ is a well-defined function on $\alg$.

For the remainder of the paper, we shall assume that $\alpha$ is a non-zero algebraic number.
Clearly $H(\alpha)\geq 1$, and by Kronecker's Theorem, we have equality precisely when $\alpha$ a root of unity.  
It is obvious that if $\zeta$ is a root of unity then
\begin{equation} \label{HeightRoot}
	H(\alpha) = H(\zeta\alpha),
\end{equation}
and further, if $n$ is an integer then it is well-known that
\begin{equation*} 
	H(\alpha^n) = H(\alpha)^{|n|}.
\end{equation*}
Also, if $\alpha,\beta\in\algt$ then $H(\alpha\beta) \leq H(\alpha)H(\beta)$ so that $H$ 
satisfies the multiplicative triangle inequality.

We further define the {\it Mahler measure} of $\alpha$ by 
\begin{equation} \label{MahlerDef}
	M(\alpha) = H(\alpha)^{[\rat(\alpha):\rat]}.
\end{equation}
Since $H$ is invariant under Galois conjugation over $\rat$, we obtain immediately
\begin{equation*} 
	M(\alpha) = \prod_{n=1}^N H(\alpha_n),
\end{equation*}
where $\alpha_1,\ldots,\alpha_N$ are the conjugates of $\alpha$ over $\rat$.  Further, it is well-known that
\begin{equation} \label{SecondMahlerDefinition}
	M(\alpha) = |A|\cdot\prod_{n=1}^N\max\{1,|\alpha_n|\},
\end{equation}
where $|\cdot |$ denotes the usual absolute value on $\com$ and $A$ is the leading coefficient of the minimal polynomial of $\alpha$ over $\intg$.  
While the right hand side of \eqref{SecondMahlerDefinition} appears initially to depend upon a particular 
embedding of $\alg$ into $\com$, any change of embedding simply permutes the images of the points $\{\alpha_n\}$ so that \eqref{SecondMahlerDefinition} 
remains unchanged.  In view of \eqref{SecondMahlerDefinition}, it is reasonable to define the {\it Mahler measure of a polynomial}
\begin{equation*}
	f(z) = A\cdot\prod_{n=1}^N(z-\alpha_n) \in \com[x]
\end{equation*}
by
\begin{equation} \label{PolynomialMahler}
	M(f) = |A|\cdot\prod_{n=1}^N\max\{1,|\alpha_n|\}.
\end{equation}
We note that if $f$ is the minimal polynomial of $\alpha$ over $\intg$, then $M(f) = M(\alpha)$, so that \eqref{PolynomialMahler} is compatible with
\eqref{MahlerDef}.

It follows, again from Kronecker's Theorem, that $M(\alpha) = 1$ if and only if $\alpha$ is a root of unity.  As part of an algorithm for
computing large primes, D.H. Lehmer \cite{Lehmer} asked whether there exists a constant $c>1$ such that $M(\alpha) \geq c$ in all other cases.
The smallest known Mahler measure greater than $1$, already found by Lehmer, occurs at a root of 
\begin{equation*}
  \ell(x) = x^{10}+x^9-x^7-x^6-x^5-x^4-x^3+x+1
\end{equation*}
which has Mahler measure $1.17\ldots$.  
Although an affirmative answer to Lehmer's problem has been given in many special cases, the general case remains open.  The best known universal lower 
bound on $M(\alpha)$ is due to Dobrowolski \cite{Dobrowolski}, who proved that 
\begin{equation*} 
  \log M(\alpha) \gg \left(\frac{\log\log \deg \alpha}{\log \deg \alpha}\right)^3
\end{equation*}
whenever $\alpha$ is not a root of unity.

In \cite{DubSmyth}, Dubickas and Smyth \cite{DubSmyth} defined the {\it metric Mahler measure} of $\alpha$ by
\begin{equation} \label{MetricDef}
	M_1(\alpha) = \inf\left\{\prod_{n=1}^NM(\alpha_n):N\in\nat,\ \alpha_n\in\algt,\ \alpha = \prod_{n=1}^N\alpha_n\right\}.
\end{equation}
Here, the infimum is taken over all ways to represent $\alpha$ as a product of elements in $\algt$.
It is easily verified that 
\begin{equation*}
	M_1(\alpha\beta) \leq M_1(\alpha)M_1(\beta)
\end{equation*} 
for all $\alpha,\beta\in\algt$.   

Further, we write
\begin{equation*}
	V = \algt/\tor(\algt)
\end{equation*}
and note that $V$ is a vector space over $\rat$.  The scalar multiplication in $V$ is given by the maps $\alpha\mapsto\alpha^r$, which is well-defined 
on $V$ for any $r$ in $\rat$.  The operation of the vector space is multiplication in $V$ and the identity element is $\tor(\algt)$.

It is a simple exercise to verify that $M_1$ is well-defined on $V$.  This implies that the map
$(\alpha,\beta)\mapsto\log M_1(\alpha\beta^{-1})$ defines a metric on $V$ which induces the discrete topology if and only if
there is an affirmative answer to Lehmer's problem.

Motivated by the work of Dubickas and Smyth, Fili and the author \cite{FiliSamuels} defined a non-Archimedean version of $M_1$ by
replacing the product in \eqref{MetricDef} by a maximum.  Define the {\it ultrametric Mahler measure} by
\begin{equation*}
	M_\infty(\alpha) = \inf\left\{\max_{1\leq n\leq N}M(\alpha_n):N\in\nat,\ \alpha_n\in\algt,\ \alpha = \prod_{n=1}^N\alpha_n\right\}.
\end{equation*}
It easily verified that $M_\infty$ satisfies the strong triangle inequality 
\begin{equation*}
	M_\infty(\alpha\beta)\leq\max\{M_\infty(\alpha),M_\infty(\beta)\}
\end{equation*}
for all non-zero algebraic numbers $\alpha$ and $\beta$.  It is further shown in \cite{FiliSamuels} that $M_\infty$ is well-defined on $V$.

The goal of this paper is to reduce the computation of $M_\infty(\alpha)$ to a certain search through a finite set.  In order to do this, we are
required to work in $V$, a space on which $M$ is not well-defined.  Hence, we define the {\it modified Mahler measure} by
\begin{equation*} 
	\bar M(\alpha) = \inf\left\{M(\zeta\alpha):\zeta\in\tor(\algt)\right\}.
\end{equation*}
It is immediately clear that $\bar M$ is well-defined on $V$.
Further, if $\pi:\algt\to V$ denotes the natural group homomorphism and $\bar\alpha\in V$, then we obtain
\begin{equation*} 
	\bar M(\bar\alpha) = \inf\left\{M(\beta):\beta\in\pi^{-1}(\bar\alpha)\right\}.
\end{equation*}
We further conclude using \eqref{HeightRoot} and \eqref{MahlerDef} that
\begin{equation} \label{MBarPower}
	\bar M(\alpha) = H(\alpha)^{\inf\{\deg(\zeta\alpha):\zeta\in\tor(\algt)\}},
\end{equation}
which implies, in particular, that there always exists a root of unity $\zeta$ such that $M(\zeta\alpha) = \bar M(\alpha)$.
Some additional basic properties of $\bar M$ will be examined in section \ref{ModifiedMeasure}.

Next, let $K_\alpha$ denote the Galois closure of $\rat(\alpha)$ over $\rat$ and define the set
\begin{equation*}
	\B(\alpha) = \{\gamma\in K_\alpha: M(\gamma) \leq M(\alpha)\}.
\end{equation*}
We note that, by a theorem of Northcott \cite{Northcott}, $\B(\alpha)$ is finite.  Then let
\begin{equation*} 
	\bar \B(\alpha) = \pi(\B(\alpha)).
\end{equation*}
We may now express the ultrametric Mahler measure in terms of a certain point in $\bar \B(\alpha)$.

\begin{thm} \label{Computation}
	Let $\alpha$ be a non-zero algebraic number and let $\bar\B(\alpha) = \{\bar b_1,\ldots,\bar b_N\}$.  Assume that
	\begin{equation} \label{Ordering}
		\bar M(\bar b_1)\leq \bar M(\bar b_2)\leq \cdots\leq \bar M(\bar b_N).
	\end{equation}
	If $J$ is the smallest index such that 
	\begin{equation*}
		\pi(\alpha) \in \spa\{\bar b_1,\cdots,\bar b_J\}
	\end{equation*}
	then $M_\infty(\alpha) = \bar M(\bar b_J)$.
\end{thm}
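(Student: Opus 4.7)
The plan is to establish both $M_\infty(\alpha)\leq\bar M(\bar b_J)$ and $M_\infty(\alpha)\geq\bar M(\bar b_J)$, exploiting the $\rat$-vector space structure of $V$ together with the ordering \eqref{Ordering}. For the upper bound I would begin from the hypothesis $\pi(\alpha)\in\spa\{\bar b_1,\ldots,\bar b_J\}$ and write $\pi(\alpha)=\sum_{n=1}^J r_n\bar b_n$ with $r_n\in\rat$. Appealing to \eqref{MBarPower}, I would choose $c_n\in\pi^{-1}(\bar b_n)$ realizing $M(c_n)=\bar M(\bar b_n)$, fix a positive integer $q$ with $qr_n\in\intg$ for all $n$, and pick any $q$-th root $d_n\in\algt$ of $c_n$. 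Because $\pi(d_n)=(1/q)\bar b_n$, one has $\pi\bigl(\prod_n d_n^{qr_n}\bigr)=\pi(\alpha)$, so $\alpha=\zeta\prod_n d_n^{qr_n}$ for some root of unity $\zeta$; expanding each $d_n^{qr_n}$ as a product of $|qr_n|$ copies of $d_n^{\pm 1}$ (and absorbing $\zeta$ as a single factor of Mahler measure $1$) exhibits a factorization of $\alpha$ of the form appearing in the definition of $M_\infty$. The relation $H(d_n)^q=H(c_n)$ together with the divisibility chain $\deg(c_n)\mid\deg(d_n)\mid q\deg(c_n)$ (which follows from $d_n^q=c_n$) then gives $M(d_n)=H(d_n)^{\deg(d_n)}\leq H(d_n)^{q\deg(c_n)}=M(c_n)=\bar M(\bar b_n)\leq\bar M(\bar b_J)$, the final inequality by \eqref{Ordering}; consequently $M_\infty(\alpha)\leq\bar M(\bar b_J)$.

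For the lower bound I would argue by contradiction. Assume $M_\infty(\alpha)<\bar M(\bar b_J)$. The key ingredient here, which I would either import from \cite{FiliSamuels} or derive via a Galois-averaging argument, is that the infimum defining $M_\infty(\alpha)$ can be approximated by factorizations whose factors lie entirely in $K_\alpha$. Selecting such a factorization $\alpha=\prod_n\alpha_n$ with $\max_n M(\alpha_n)<\bar M(\bar b_J)$, each factor satisfies $M(\alpha_n)<\bar M(\bar b_J)\leq M(\alpha)$ and $\alpha_n\in K_\alpha$, so $\alpha_n\in\B(\alpha)$ and $\pi(\alpha_n)=\bar b_{k_n}$ for some index $k_n$. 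The estimate $\bar M(\bar b_{k_n})\leq M(\alpha_n)<\bar M(\bar b_J)$ combined with \eqref{Ordering} forces $k_n<J$ for every $n$, so $\pi(\alpha)=\sum_n\pi(\alpha_n)\in\spa\{\bar b_1,\ldots,\bar b_{J-1}\}$, contradicting the minimality of $J$.

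The main obstacle I anticipate is precisely this reduction to factorizations with $\alpha_n\in K_\alpha$. A direct approach would fix a Galois extension $L/\rat$ containing $\alpha$ and all the $\alpha_n$, apply each element of $\mathrm{Gal}(L/K_\alpha)$ to the factorization $\alpha=\prod_n\alpha_n$, and multiply the resulting factorizations to obtain $\alpha^{[L:K_\alpha]}=\prod_n\norm_{L/K_\alpha}(\alpha_n)$; recovering a factorization of $\alpha$ itself then demands taking $[L:K_\alpha]$-th roots and controlling the Mahler measures under those roots, in the same spirit as the $M(d_n)\leq M(c_n)$ computation used in the upper bound. Once this reduction is available, the remainder of the argument is a short bookkeeping exercise in $V$ using Northcott's finiteness of $\B(\alpha)$ and the ordering \eqref{Ordering}.
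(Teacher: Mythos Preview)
Your upper bound is correct and essentially matches the paper's: the paper also writes $\pi(\alpha)=\prod_j\bar b_j^{r_j}$, splits each rational exponent, and uses the inequality $\bar M(\bar b_j^{1/t_j})\leq\bar M(\bar b_j)$ (your $M(d_n)\leq M(c_n)$ computation, stated in the paper as Lemma~\ref{IntegerPower}).

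The lower bound has a genuine gap at exactly the point you flag. Your Galois-averaging sketch proposes replacing each $\alpha_n$ by $\norm_{L/K_\alpha}(\alpha_n)\in K_\alpha$, but this norm need not have Mahler measure $\leq M(\alpha_n)$: for instance with $K_\alpha=\rat$, $\alpha_n=2^{1/3}$, and $L=\rat(2^{1/3},e^{2\pi i/3})$, one gets $\norm_{L/\rat}(2^{1/3})=4$ while $M(2^{1/3})=2$. Taking $[L:K_\alpha]$-th roots afterwards only makes things worse, since the roots are no longer in $K_\alpha$ and so cannot lie in $\B(\alpha)$. Thus the implication ``$\alpha_n\in K_\alpha$, hence $\alpha_n\in\B(\alpha)$'' is never reached.

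The paper's route is different and more delicate. It first invokes the earlier result (Theorem~2.1 of \cite{Samuels}, restated here as Theorem~\ref{Reduction}) to replace the $\bar\alpha_n$ by elements of $S(K_\alpha)=\pi(\rad(K_\alpha))$ with no increase in $\bar M$; this is itself a Galois argument, but one that does \emph{not} simply take norms. Crucially it lands in $\rad(K_\alpha)$, not in $K_\alpha$. The passage from $\rad(K_\alpha)$ to $\bar\B(\alpha)$ is then handled by Theorem~\ref{BestInf}, which for each $\bar\alpha_n\in S(K_\alpha)$ produces a rational exponent $r_n$ with $\bar\alpha_n^{r_n}\in\bar\B(\alpha)$ \emph{and} $\bar M(\bar\alpha_n^{r_n})=\inf_{s\in\rat^\times}\bar M(\bar\alpha_n^s)\leq\bar M(\bar\alpha_n)$; the underlying mechanism is Lemma~\ref{HeightInK}, which says that for $\gamma\in\rad(K_\alpha)$ some $\zeta\gamma^L$ lies in $K_\alpha$ with $M(\zeta\gamma^L)\leq M(\gamma)$. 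Once each $\bar\alpha_n^{r_n}$ equals some $\bar b_{\ell_n}$, the span argument and the minimality of $J$ finish as you intended. If you want to keep your contradiction framework, the fix is to replace the norm step by \cite{Samuels} together with Lemma~\ref{HeightInK}; a direct norm down to $K_\alpha$ will not do the job.
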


In some cases, Theorem \ref{Computation} enables us to compute $M_\infty(\alpha)$.  The related calculations will be simpler using the following restatement of
Theorem \ref{Computation}.

\begin{cor} \label{SimpleComputation}
	Let $\alpha$ be a non-zero algebraic number and let $\B = \{b_1,\ldots,b_N\} \subseteq \B(\alpha)$ be such that $\pi(\B) = \bar \B(\alpha)$.  Assume that
	\begin{equation} \label{SimpleOrdering}
		\bar M(b_1)\leq \bar M(b_2)\leq \cdots\leq \bar M(b_N).
	\end{equation}
	If $J$ is the smallest index such that there exists a positive integer $s$ with 
	\begin{equation*}
		\alpha^s \in \langle b_1,\ldots,b_J \rangle
	\end{equation*}
	then $M_\infty(\alpha) = \bar M(b_J)$.
\end{cor}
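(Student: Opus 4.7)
The plan is to derive Corollary \ref{SimpleComputation} directly from Theorem \ref{Computation}. Since the only difference between the two statements is that the corollary works with specific lifts $b_j \in \B(\alpha)$ rather than their classes $\bar b_j \in V$, the proof should consist entirely of translating the hypotheses and conclusion of Theorem \ref{Computation} back up to $\algt$ via the homomorphism $\pi$.

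First, I would handle the indexing. Because $\pi(\B) = \bar\B(\alpha)$ and both $\B$ and $\bar\B(\alpha)$ have $N$ elements, the restriction $\pi|_\B$ is a bijection. Combining this with the fact that $\bar M$ is well-defined on $V$, we have $\bar M(b_j) = \bar M(\pi(b_j))$ for each $j$. Consequently, the ordering \eqref{SimpleOrdering} of the $b_j$'s forces the ordering \eqref{Ordering} on $\bar b_j := \pi(b_j)$, so Theorem \ref{Computation} applies with this labeling.

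Second, I would verify the equivalence of the two index conditions, namely that for each $J$,
\begin{equation*}
	\pi(\alpha) \in \spa\{\bar b_1,\ldots,\bar b_J\} \quad\Longleftrightarrow\quad \alpha^s \in \langle b_1,\ldots,b_J\rangle \text{ for some integer } s \geq 1.
\end{equation*}
The ($\Leftarrow$) direction is immediate: applying $\pi$ to an identity $\alpha^s = b_1^{t_1}\cdots b_J^{t_J}$ gives $\pi(\alpha) = \sum_{i=1}^J (t_i/s)\bar b_i$ in $V$, written additively. For ($\Rightarrow$), write $\pi(\alpha) = \sum_{i=1}^J r_i \bar b_i$ with $r_i \in \rat$ and clear a common denominator $s$ to obtain integers $t_i$ with $\pi(\alpha^s) = \pi(b_1^{t_1}\cdots b_J^{t_J})$. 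This means $\alpha^s = \zeta\, b_1^{t_1}\cdots b_J^{t_J}$ for some $\zeta \in \tor(\algt)$, and raising to the power $m = \mathrm{ord}(\zeta)$ absorbs the root of unity, giving $\alpha^{sm} \in \langle b_1,\ldots,b_J\rangle$.

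Since both conditions characterize the same integer $J$, the conclusion of Theorem \ref{Computation} yields $M_\infty(\alpha) = \bar M(\bar b_J) = \bar M(b_J)$, finishing the corollary. There is no genuine obstacle here; the argument is purely formal. The only subtlety worth writing out carefully is the clearing-of-denominators step and the observation that the resulting ambiguity by a root of unity can always be eliminated at the cost of enlarging the exponent $s$, which is exactly why the corollary's condition must allow \emph{some} positive power of $\alpha$ rather than $\alpha$ itself.
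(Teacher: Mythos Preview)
Your proposal is correct and follows essentially the same approach as the paper: both proofs reduce to Theorem \ref{Computation} by translating between the span condition in $V$ and the group-membership condition in $\algt$, clearing denominators and then killing the resulting root of unity by raising to its order. The only cosmetic difference is that you package the translation as a single biconditional valid for every index, whereas the paper argues separately that $J$ satisfies the span condition and that $J-1$ does not.
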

	
We will present a more involved example in section \ref{Applications}, but we note one simple application here.  In \cite{FiliSamuels}, Fili and the author showed, 
using another method, that $M_\infty(4) = 2$.  We are able to recover this observation using Theorem \ref{Computation}.  We have that
\begin{equation*}
	\B(4) = \left\{\pm 1, \pm 2, \pm 3, \pm 4, \pm \frac{1}{2}, \pm \frac{3}{2}, \pm \frac{1}{3}, \pm \frac{2}{3}, \pm \frac{4}{3},
	    \pm \frac{1}{4}, \pm \frac{3}{4}\right\}.
\end{equation*}
Now let
\begin{equation*}
	\B = \left\{ 1, 2, 3, 4, \frac{1}{2}, \frac{3}{2}, \frac{1}{3}, \frac{2}{3}, \frac{4}{3}, \frac{1}{4}, \frac{3}{4}\right\}.
\end{equation*}
and note that $\pi(\B) = \bar \B(4)$.
For $r\in \rat$, it follows from \eqref{MBarPower} that $\bar M(r) = M(r)$.  Now we may rewrite $\B$ with its elements written
in increasing order of modified Mahler measure.  We obtain
\begin{equation*}
	\B = \left\{ 1, 2,\frac{1}{2}, 3, \frac{1}{3}, \frac{2}{3}, \frac{3}{2}, 4, \frac{1}{4}, \frac{3}{4}, \frac{4}{3}\right\}.
\end{equation*}
We observe that $4 = 1^0 2^2$.  However, there cannot exist an integer $s > 0$ such that $4^s \in \langle 1 \rangle$ since $4$ is not a root of unity.  
By Corollary \ref{SimpleComputation}, we conclude that $M_\infty(4) = 2$.

This argument can be used to recover a more general statement from \cite{FiliSamuels}.

\begin{cor} \label{RationalAlg}
	If $\alpha$ is rational, then $M_\infty(\alpha)$ equals the largest prime dividing the numerator or denominator of $\alpha$.
\end{cor}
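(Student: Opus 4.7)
The plan is to apply Corollary \ref{SimpleComputation} directly. First I would fix notation: write $\alpha = a/b$ with $a \in \intg \setminus \{0\}$, $b \in \nat$, $\gcd(|a|, b) = 1$, let $p$ be the largest prime dividing $ab$, and record that $M(\alpha) = \max(|a|, b) \geq p$. Since $\alpha \in \rat$, we have $K_\alpha = \rat$, so $\B(\alpha) = \{r \in \rat : M(r) \leq M(\alpha)\}$, and for a rational $r = c/d$ in lowest terms with $c \in \intg$, $d \in \nat$, one has $M(r) = \max(|c|, d)$ and $\bar M(r) = M(r)$ by \eqref{MBarPower}. Because the only roots of unity in $\rat^\times$ are $\pm 1$, the map $\pi$ identifies $r$ with $-r$, so I may take
\begin{equation*}
    \B = \{c/d : c, d \in \nat, \ \gcd(c,d) = 1, \ \max(c,d) \leq M(\alpha)\}
\end{equation*}
as a set of positive representatives with $\pi(\B) = \bar\B(\alpha)$. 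Order $\B = \{b_1, \ldots, b_N\}$ by increasing $\bar M$ as in \eqref{SimpleOrdering}.

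Next I would show that the index $J$ produced by Corollary \ref{SimpleComputation} satisfies $\bar M(b_J) \geq p$. If instead $\bar M(b_i) < p$ for every $i \leq J$, then in lowest terms each $b_i$ has numerator and denominator strictly less than $p$, so every prime appearing in the factorization of any $b_i$ is $< p$. Consequently every element of $\langle b_1, \ldots, b_J \rangle$ has $p$-adic valuation zero, whereas $v_p(\alpha^s) = s \cdot v_p(\alpha) \neq 0$ for all positive integers $s$, contradicting $\alpha^s \in \langle b_1, \ldots, b_J \rangle$.

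For the matching upper bound, let $J$ be the smallest index with $\bar M(b_J) = p$. Since the ordering is by $\bar M$, the preceding set $\{b_1, \ldots, b_{J-1}\}$ contains every positive rational of modified Mahler measure strictly less than $p$; in particular it contains every prime $q < p$. Moreover, since $b_J$ in lowest terms has $\max$ equal to the prime $p$, either its numerator or its denominator equals $p$, so $v_p(b_J) = \pm 1$. Combining, $\langle b_1, \ldots, b_J \rangle$ contains the multiplicative subgroup of $\rat^\times$ generated by all primes $\leq p$, which contains $|\alpha| = |a|/b$. Taking $s = 1$ if $\alpha > 0$ and $s = 2$ if $\alpha < 0$ (to neutralize the sign, since our $b_i$ are positive) yields $\alpha^s \in \langle b_1, \ldots, b_J \rangle$. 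The two bounds together give $M_\infty(\alpha) = \bar M(b_J) = p$.

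The only nontrivial point is the upper bound step — specifically, the observation that \emph{any} element at the $\bar M = p$ level automatically contributes $v_p = \pm 1$, so the first $b_J$ reaching this level, together with the already-present primes below $p$, is enough to generate $\alpha$ up to sign. The sign issue is handled painlessly by allowing $s = 2$; the rest is routine accounting using the ordering of $\B$ by $\bar M$.
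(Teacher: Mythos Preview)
Your proof is correct and follows essentially the same route as the paper's: both set up $\B$ as a set of rational representatives, identify $\bar M = M$ on rationals, and use the $p$-adic valuation to show that no power of $\alpha$ can lie in the subgroup generated by elements of Mahler measure below $p$. The only cosmetic differences are that the paper keeps $\pm$ signs in $\B$ (so $-1\in\B$ handles the sign and $s=1$ suffices) and reorders ties so that $b_J=p$ itself, whereas you take positive representatives, allow $s=2$ for negative $\alpha$, and instead observe that any $b_J$ at the level $\bar M=p$ already has $v_p(b_J)=\pm1$; these are equivalent ways to reach the same conclusion.
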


The remainder of this paper is structured as follows.  Section \ref{Applications} contains an additional example of Theorem \ref{Computation}
when $\deg\alpha = 2$.  Although we know of no general formula analogous to Corollary \ref{RationalAlg}, the quadratic situation is simple enough that 
some explicit computations can be made.  In section \ref{ModifiedMeasure}, we examine the properties of $\bar M$ that we will need in 
order to prove our main results.  Finally, we use sections \ref{MainProof} and \ref{ApplicationsProofs} to establish Theorem \ref{Computation} as well as
prove some results related to the applications in section \ref{Applications}.

\section{Further Applications} \label{Applications}

Although Theorem \ref{Computation} is of theoretical interest, we would like to apply it to compute values of $M_\infty(\alpha)$.  Initially, this seems quite 
reasonable since Theorem \ref{Computation} reduces the computation of $M_\infty(\alpha)$ to a search over a finite set.  However, there remain three obstacles to 
performing such a computation.\\

\begin{enumerate}[I.]
\item\label{BObstacle} We must determine an appropriate set $\B\subseteq \B(\alpha)$ for use in Corollary \ref{SimpleComputation}.  While there are explicit upper bounds on the 
cardinality of $\B(\alpha)$ in terms of $\deg\alpha$ (see, for example \cite{LoherMasser}), we are unaware of an algorithm for recording these points.
As we will discuss in section \ref{ApplicationsProofs}, there is a highly inefficient method for recording all \emph{polynomials} of Mahler measure at most $M(\alpha)$, a collection
whose roots clearly belong to $\B(\alpha)$.  However, even this is insufficient since these polynomials may not be solvable.

\item\label{ModifiedObstacle} The modified Mahler measures of points in $\B$ need to be computed with sufficient accuracy to write these points in increasing order of modified Mahler measures.
In view of \eqref{MBarPower}, we know that $\bar M(\alpha) = M(\zeta\alpha)$ where $\zeta$ is a root of unity that makes $\deg(\zeta\alpha)$ as small as possible.
Unfortunately, we know of no general method to locate a suitable element $\zeta$.

\item\label{LocatingObstacle} We must locate the point $b_J$, as required by Corollary \ref{SimpleComputation}.  Again, we do not know of a general method for doing so.
\end{enumerate}

If $\alpha$ is a quadratic number with $M(\alpha) \leq 100$, then we are able to use PARI \cite{PARI} to construct a set of polynomials whose roots form a suitable set 
$\B$ for use in Corollary \ref{SimpleComputation}.  If the list is not too long, then it is a simple exercise to record the roots of these polynomials, resolving \eqref{BObstacle}.
Further, we are able to give formulae for $\bar M(\gamma)$ when $\deg\gamma \leq 2$, which resolves \eqref{ModifiedObstacle}.
We provide an example where are able to resolve \eqref{LocatingObstacle} as well and compute the value $M_\infty(\alpha)$.

For simplicity, we will now write
\begin{equation*}
	\B_d(\alpha) = \B(\alpha) \cap \{\alpha\in\alg: \deg\alpha = d\}
\end{equation*}
and note that
\begin{equation} \label{BUnion}
	\B(\alpha) = \bigcup_{d\mid [K_\alpha:\rat]} \B_d(\alpha)
\end{equation}
where the right hand side is a disjoint union.  From elementary facts about the Mahler measure, we notice that
\begin{equation} \label{RationalB}
	\B_1(\alpha) = \left\{\pm \frac{m}{n}: m,n \in \nat,\ (m,n) = 1,\ \max\{|m|,|n|\} \leq M(\alpha) \right\}
\end{equation}
so it is a simple exercise to obtain the points of $\B_1(\alpha)$.

Since we are now interested in the quadratic case, we will assume for the remainder of this section that $\deg\alpha\leq 2$.
To resolve \eqref{BObstacle} in this situation, we must compute a subset $\B''\subset \B_2(\alpha)$ such that $\pi(\B'') = \pi(\B_2(\alpha))$.
Before we write a PARI program to do this, we must write a program to estimate the Mahler measure of a quadratic polynomial $ax^2 + bx + c$.

\begin{verbatim}
Mahler(a,b,c) =
{
  abs(a)*max(1,abs((-b+sqrt(b^2-4*a*c))/(2*a)))
             *max(1,abs((-b-sqrt(b^2-4*a*c))/(2*a)));
}
\end{verbatim}

While {\tt Mahler(a,b,c)} is only an estimate of $M(ax^2+bx+c)$, it is reasonable to assume some level of accuracy.  For our purposes, we will assume that
\begin{equation*} \label{EstimateAmount0}
	|M(ax^2+bx+c) - {\tt Mahler(a,b,c)}| < 10^{-10}.
\end{equation*}
Indeed, it is quite reasonable to assume that PARI will compute accurately up to at least $10$ decimal places.

Next suppose that $\alpha$ is a root of the irreducible polynomial $Ax^2+Bx+C$ and that $k$ is the unique square-free integer such that $K_\alpha = \rat(\sqrt k)$.
We claim that the following simple program can be used to find a suitable set $\B$ for use in Corollary \eqref{SimpleComputation}.

\begin{verbatim}
B2List(A,B,C,k) = 
{
  local(M);

  M = Mahler(A,B,C) + 10^(-10);

  for(a=1,floor(M),
    for(b=0,floor(2*M),
      for(c=-floor(M),floor(M),
        if(!issquare(b^2-4*a*c) && gcd(a,gcd(b,c)) ==1 && 
           issquare((b^2-4*a*c)/k) && Mahler(a,b,c) < M + 10^(-10), 
           printp("("a","b","c") -- "Mahler(a,b,c)));
      );
    );
  );
}
\end{verbatim}

The above program searches all triples $(a,b,c)$ satisfying
\begin{equation*}
	1 \leq a \leq M(\alpha),\ 0 \leq b \leq 2M(\alpha),\ \mathrm{and}\ -M(\alpha) \leq c \leq M(\alpha).
\end{equation*}
For each such point, it checks, up to some computing error, if
\begin{enumerate}[(i)]
	\item $b^2-4ac$ is not a perfect square and $\gcd(a,b,c) = 1$ (i.e., $ax^2+bx+c$ is irreducible),
	\item $(b^2 - 4ac)/k$ is a perfect square (i.e., the roots of $ax^2+bx+c$ belong to $\rat(\sqrt k)$),
	\item $M(ax^2+bx+c) \leq M(\alpha)$.
\end{enumerate}
If the above three conditions are satisfied, the the program prints $(a,b,c)$ with $M(ax^2+bx+c)$ alongside.  Otherwise, it prints nothing.

Our next theorem shows that the output list of {\tt B2List(A,B,C,k)} can, indeed, be used to construct an appropriate set $\B$ for use in 
Corollary \ref{SimpleComputation}, provided that $M(\alpha)$ is not too large.

\begin{thm} \label{RestrictedListComputing}
	Suppose $\alpha$ is a quadratic number with minimal polynomial $Ax^2+Bx+C$ and $M(\alpha) \leq 100$.  Let $k$ be the unique square-free integer
	such that $K_\alpha = \rat(\sqrt k)$.  Assume that
	\begin{equation} \label{EstimateAmount}
		|M(ax^2+bx+c) - {\tt Mahler(a,b,c)}| < 10^{-10}
	\end{equation}
	holds for all integers $a, b$ and $c$ with $a > 0$.  Suppose that $\B''$ is the set of all roots of the polynomials $ax^2+bx+c$, where
	$(a,b,c)$ appears in the output of {\tt B2List(A,B,C,k)}.  Further write
	\begin{equation*}
		\B' = \left\{\frac{m}{n}: m,n \in \nat,\ (m,n) = 1,\ \max\{|m|,|n|\} \leq M(\alpha) \right\}.
	\end{equation*}
	If $\B = \B'\cup\B''$ then $\B\subset \B(\alpha)$ and $\pi(\B) = \bar \B(\alpha)$.
\end{thm}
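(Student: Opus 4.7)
The plan is to establish the two claims $\B \subseteq \B(\alpha)$ and $\pi(\B) = \bar\B(\alpha)$ separately, exploiting the disjoint decomposition \eqref{BUnion} to treat the rational and the quadratic parts independently. For the rational part, \eqref{RationalB} describes $\B_1(\alpha)$ exactly, and since $-1\in\tor(\algt)$ we have $\pi(r)=\pi(-r)$, so $\B'$ is a complete set of $\pi$-representatives for $\B_1(\alpha)$ whose members all lie in $\B(\alpha)$.

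For the quadratic part, I would first justify the search ranges in the PARI loops by invoking the standard coefficient bound $|c_k|\le \binom{d}{k}M(f)$ for a polynomial of degree $d$. Specialized to degree two this gives $|a|\le M(f)$, $|b|\le 2M(f)$, and $|c|\le M(f)$ for any $f=ax^2+bx+c\in\intg[x]$, so every irreducible $f$ with $M(f)\le M(\alpha)$ has coefficients inside the search region. The symmetry $(a,b,c)\mapsto(a,-b,c)$ sends each root $\beta$ to $-\beta=(-1)\beta$, and since $\pi(\beta)=\pi(-\beta)$, the restriction to $b\ge 0$ and $a\ge 1$ still captures every $\pi$-class in $\pi(\B_2(\alpha))$. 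The algebraic filters are transparent: non-squareness of $b^2-4ac$ together with $\gcd(a,b,c)=1$ is equivalent to irreducibility of $ax^2+bx+c$ over $\rat$, while the requirement that $(b^2-4ac)/k$ be a perfect square is equivalent to $\sqrt{b^2-4ac}\in\rat(\sqrt k)=K_\alpha$.

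The main obstacle is the numerical Mahler check. Under the hypothesis \eqref{EstimateAmount}, a short triangle inequality gives two implications. First, any $(a,b,c)$ with exact $M(ax^2+bx+c)\le M(\alpha)$ passes the numerical test, which immediately ensures $\pi(\B)\supseteq\bar\B(\alpha)$. Second, any triple passing the test satisfies only the weaker bound $M(ax^2+bx+c) < M(\alpha) + 4\cdot 10^{-10}$, which is not by itself enough to conclude $M(ax^2+bx+c)\le M(\alpha)$. To close this gap I would prove a separation estimate: for integer coefficients with $|a|,|b|,|c|\le 200$, the Mahler measure of $ax^2+bx+c$ is either an integer at most $100$, or of the explicit form $(|b|+\sqrt{b^2-4ac})/2$ with bounded integer parameters, so the finitely many possible values form a discrete set in which any two distinct values differ by more than $4\cdot 10^{-10}$ once $M(\alpha)\le 100$. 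This rules out any spurious output of the program, giving $\B\subseteq\B(\alpha)$ and completing the proof that $\pi(\B)=\bar\B(\alpha)$.
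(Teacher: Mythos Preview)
Your outline matches the paper's proof closely in structure: both split into the rational and quadratic parts via \eqref{BUnion}, both use the coefficient bound $|a_n|\le\binom{N}{n}M(f)$ (the paper isolates this as Lemma~\ref{BAlg}) to justify the search ranges, both read off the algebraic filters in the same way, and both handle the sign symmetry $b\mapsto -b$ to recover the full $\pi$-image from the restriction $b\ge 0$.

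The one substantive difference is in closing the numerical gap. The paper does not argue directly with the explicit shape of $M(ax^2+bx+c)$; instead it invokes a result of Dubickas (recorded as Lemma~\ref{DubickasLemma}) giving that if $\gamma\in\rat(\alpha)$ satisfies $M(\gamma)>M(\alpha)$ then in fact $M(\gamma)>M(\alpha)+\dfrac{1}{16\,M(\alpha)^4}$. Since $M(\alpha)\le 100$, this gap exceeds $6\times 10^{-10}>4\times 10^{-10}$, and the argument closes in one line. Your alternative---observing that each $M(ax^2+bx+c)$ is either an integer or of the form $\tfrac{1}{2}\bigl(|b|+\sqrt{b^2-4ac}\bigr)$ and then separating two such values---can indeed be made to work (a norm computation for $m+\sqrt{D_1}-\sqrt{D_2}$ in $\rat(\sqrt{D_1},\sqrt{D_2})$, with $|m|\le 200$ and $D_i\le 8\times 10^4$, yields a lower bound of order $10^{-9}$), but it is not quite as immediate as your sketch suggests and needs a short case analysis to exclude vanishing of the auxiliary conjugate factors. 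The payoff of your route is that it is entirely self-contained; the paper's route is shorter once Dubickas' theorem is taken off the shelf.
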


We now turn our attention to resolving \eqref{ModifiedObstacle}.  We already have a PARI function {\tt Mahler(a,b,c)} that approximates the Mahler measure of
a quadratic polynomial.  So we must reduce the computation of $\bar M$ to a computation of $M$.  The following theorem shows how to do this in the quadratic case.

\begin{thm} \label{MBarCalcs}
	Suppose that $\gamma\in\alg$ with $\deg\gamma \leq 2$.
	\begin{enumerate}[(i)]
	\item\label{GeneralCase} If $\rat(\gamma) \ne \rat(i)$ and $\rat(\gamma) \ne \rat(i\sqrt 3)$ then $\bar M(\gamma) = M(\gamma)$.
	\item\label{ICase} If $\gamma = a+bi$ for rational numbers $a$ and $b$ then
	\begin{equation*}
		\bar M(\gamma) = \left\{
		\begin{array}{ll}
			\displaystyle M(b) & \mathrm{if}\ a=0 \\
			\displaystyle M(\gamma) & \mathrm{if}\ a \ne 0.
		\end{array}
		\right.
	\end{equation*}
	\item\label{3rdCase} If $\gamma = a +b\sqrt{-3}$ for rational numbers $a$ and $b$ then
	\begin{equation*}
		\bar M(\gamma) = \left\{
		\begin{array}{ll}
			\displaystyle M(2a) & \mathrm{if}\ a\in\{b,-b\} \\
			\displaystyle M(\gamma) & \mathrm{if}\ a \not\in \{b,-b\}.
		\end{array}
		\right.
	\end{equation*}
	\end{enumerate}	
\end{thm}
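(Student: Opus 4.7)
The plan is to base the entire argument on equation \eqref{MBarPower}, which gives
\begin{equation*}
  \bar M(\gamma) = H(\gamma)^{d(\gamma)}, \qquad d(\gamma) := \inf\{\deg(\zeta\gamma) : \zeta \in \tor(\algt)\}.
\end{equation*}
Since $\deg\gamma \leq 2$ and $\zeta = 1$ is always a legal choice, $d(\gamma) \in \{1,2\}$. The key observation is that $\deg(\zeta\gamma) = 1$ forces $\zeta\gamma \in \rat$, and consequently $\zeta = (\zeta\gamma)/\gamma \in \rat(\gamma)$. So the only roots of unity that can possibly lower the degree of $\gamma$ are those already lying in $\rat(\gamma)$, and the structure of the torsion subgroup of $\rat(\gamma)^\times$ drives the three-way case split.

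For part (i), I would invoke the standard classification of roots of unity in a quadratic field: they form $\{\pm 1\}$ unless $\rat(\gamma) \in \{\rat(i), \rat(\sqrt{-3})\}$. Since $\pm\gamma$ is rational only when $\gamma$ itself has degree one, $d(\gamma) = \deg\gamma$ in both the degree-one and degree-two subcases, so $\bar M(\gamma) = H(\gamma)^{\deg\gamma} = M(\gamma)$.

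For parts (ii) and (iii), I would run through the finitely many roots of unity in $\rat(i)$ (namely $\pm 1, \pm i$) and in $\rat(\sqrt{-3})$ (the six sixth roots of unity) and determine, by direct expansion of $\zeta\gamma$, exactly when the product lands in $\rat$. In part (ii) this occurs precisely when $a = 0$; taking $\zeta = -i$ gives $\zeta\gamma = b$, so $\bar M(\gamma) = M(b)$, while for $a \neq 0$ no choice of $\zeta$ lowers the degree and $\bar M(\gamma) = M(\gamma)$. In part (iii), after expanding $\omega\gamma$ and $\omega^2\gamma$ with $\omega = -\tfrac{1}{2} + \tfrac{\sqrt{-3}}{2}$, one finds $\omega\gamma \in \rat$ iff $a = b$ and $\omega^2\gamma \in \rat$ iff $a = -b$; in either case the minimizing $\zeta$ collapses $\gamma$ onto $\pm 2a$, yielding $\bar M(\gamma) = M(2a)$. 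Outside these two lines, no root of unity lowers the degree and we obtain $\bar M(\gamma) = M(\gamma)$.

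The principal obstacle, though still routine, is the bookkeeping in part (iii): one must expand both $\omega\gamma$ and $\omega^2\gamma$, separate the $1$- and $\sqrt{-3}$-components, and verify the two sign cases consistently before reading off the optimal $\zeta$. Everything else is immediate from \eqref{MBarPower} combined with the identity $H(\beta) = M(\beta)$ whenever $\beta$ is rational of degree one.
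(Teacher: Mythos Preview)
Your proposal is correct and follows essentially the same approach as the paper: both arguments rest on \eqref{MBarPower}, the observation that a degree-lowering root of unity $\zeta$ must already lie in $\rat(\gamma)$, and the explicit enumeration of the roots of unity in $\rat(i)$ and $\rat(\sqrt{-3})$. The paper phrases part~(i) and the ``$\bar M(\gamma)=M(\gamma)$'' subcases of (ii)--(iii) as arguments by contradiction rather than directly, and in (iii) it works in the basis $\{1,\tfrac{1+\sqrt{-3}}{2}\}$ instead of expanding $\omega\gamma$ and $\omega^2\gamma$, but the substance is identical.
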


As we have noted, we know of no method that resovles \eqref{LocatingObstacle} for general quadratic numbers.  However, in the following example, $b_J$ can be found, and hence, 
$M_\infty(\alpha)$ can be computed.

\begin{ex}
	We take
	\begin{equation*}
		\alpha = \frac{5 + \sqrt{21}}{2}
	\end{equation*}
	so that $k = 21$.  We note that $\alpha$ has minimal polynomial $x^2 - 5x + 1$ and Mahler measure
	\begin{equation*}
		M(\alpha) = \frac{5 + \sqrt{21}}{2} = 4.791287847477920003294023597
	\end{equation*}
	as computed by {\tt Mahler(1,-5,1)}.  Executing {\tt B2List(1,-5,1,21)} yields the output
	\vskip4mm
	\begin{verbatim}
	(1, 3, -3) -- 3.791287847477920003294023597
	(1, 5, 1) -- 4.791287847477920003294023597
	(3, 3, -1) -- 3.791287847477920003294023597
	\end{verbatim}
	\vskip4mm
	Hence, the set $\B''$ from Theorem \ref{RestrictedListComputing} is given by
	\begin{equation*}
		\B'' = \left\{ -\frac{3\pm \sqrt{21}}{2}, -\frac{5 \pm \sqrt{21}}{2}, -\frac{3\pm \sqrt{21}}{6} \right\}
	\end{equation*}
	and we also have
	\begin{equation*}
		\B' = \left\{ 1, 2, 3, 4, \frac{1}{2}, \frac{3}{2}, \frac{1}{3}, \frac{2}{3}, \frac{4}{3}, \frac{1}{4}, \frac{3}{4}\right\}.
	\end{equation*}
	According to Theorem \ref{RestrictedListComputing}, we may set
	\begin{align} \label{FirstBApp}
	\B & = \left\{1, 2,\frac{1}{2}, 3, \frac{1}{3}, \frac{2}{3}, \frac{3}{2}, -\frac{3 + \sqrt{21}}{2}, -\frac{3 - \sqrt{21}}{2}, -\frac{3 + \sqrt{21}}{6},\right. \nonumber \\
						&\qquad\qquad \left.-\frac{3 - \sqrt{21}}{6}, 4, \frac{1}{4}, \frac{3}{4}, \frac{4}{3},-\frac{5 + \sqrt{21}}{2}, -\frac{5 - \sqrt{21}}{2} \right\}.
	\end{align}
	and we have that $\B \subset \B(\alpha)$ such that $\pi(\B) = \bar \B(\alpha)$.  By Theorem \ref{MBarCalcs}, we have that $M(\gamma) = \bar M(\gamma)$ 
	for all $\gamma\in K_\alpha$, and therefore, \eqref{FirstBApp} is already recorded in increasing order of modified Mahler measures.  A short computation reveals that
	\begin{equation*}
		\frac{5 + \sqrt{21}}{2} = \frac{1}{3}\cdot\left( -\frac{3 + \sqrt{21}}{2}\right)^2
	\end{equation*}
	so that
	\begin{equation*}
		\alpha \in \left\langle 1, 2,\frac{1}{2}, 3, \frac{1}{3}, \frac{2}{3}, \frac{3}{2}, -\frac{3 + \sqrt{21}}{2} \right\rangle.
	\end{equation*}
	However, if there exists a positive integer $s$ such that
	\begin{equation*}
		\alpha^s \in \left\langle 1, 2,\frac{1}{2}, 3, \frac{1}{3}, \frac{2}{3}, \frac{3}{2}\right\rangle
	\end{equation*}
	then $\alpha^s$ is rational, which is a contradiction using the binomial theorem.  It follows from Corollary \ref{SimpleComputation} that
	\begin{equation*}
		M_\infty\left(	\frac{5 + \sqrt{21}}{2}\right) = M\left(-\frac{3 + \sqrt{21}}{2}\right) = \frac{3 + \sqrt{21}}{2} = 3.791287847...
	\end{equation*}
\end{ex}

\section{The modified Mahler measure} \label{ModifiedMeasure}

In our proof of Theorem \ref{Computation}, we will often be required to consider the modified Mahler measure rather than the classical 
Mahler measure.  In this section, we establish some basic properties that relate these two functions.
Our first lemma establishes a basic inequality regarding powers of algebraic numbers in the function $\bar M$.

\begin{lem} \label{IntegerPower}
	If $\bar\alpha\in V$ and $L$ is a positive integer then $\bar M(\bar\alpha) \leq \bar M(\bar\alpha^L)$.
\end{lem}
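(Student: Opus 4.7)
The plan is to use the attainment result following \eqref{MBarPower} together with the divisibility of the group of roots of unity. First I would pick a root of unity $\eta$ with $M(\eta\alpha^L) = \bar M(\bar\alpha^L)$, and then choose a root of unity $\gamma$ satisfying $\gamma^L = \eta$ (such a $\gamma$ exists because $\tor(\algt)$ is divisible). Setting $\delta = \gamma\alpha$, I obtain $\delta^L = \eta\alpha^L$, and since $\delta$ and $\alpha$ differ by a root of unity, $\bar\delta = \bar\alpha$ in $V$. By the very definition of $\bar M$, this gives $\bar M(\bar\alpha) = \bar M(\bar\delta) \leq M(\delta)$.

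The main step is to prove the general inequality $M(\beta) \leq M(\beta^L)$ for every non-zero algebraic $\beta$. Using the standard identity $H(\beta^L) = H(\beta)^L$, the desired bound becomes
\begin{equation*}
  H(\beta)^{\deg\beta} \leq H(\beta)^{L \cdot \deg\beta^L},
\end{equation*}
so (assuming $\beta$ is not a root of unity, else both sides are $1$) it suffices to show $\deg\beta \leq L\cdot\deg\beta^L$. This is immediate because $\beta$ is a root of $x^L - \beta^L \in \rat(\beta^L)[x]$, giving $[\rat(\beta):\rat(\beta^L)] \leq L$, and hence $\deg\beta = [\rat(\beta):\rat(\beta^L)]\cdot\deg\beta^L \leq L\cdot\deg\beta^L$.

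Applying this inequality to $\beta = \delta$ then chains together as
\begin{equation*}
  \bar M(\bar\alpha) \leq M(\delta) \leq M(\delta^L) = M(\eta\alpha^L) = \bar M(\bar\alpha^L),
\end{equation*}
which is the claim. I do not foresee any serious obstacle: the only mildly delicate point is the ability to extract an $L$-th root of the optimal $\eta$, but divisibility of $\tor(\algt)$ makes this automatic, and the comparison $M(\beta) \leq M(\beta^L)$ is a routine field-theoretic computation.
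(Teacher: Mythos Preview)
Your proof is correct and follows essentially the same approach as the paper: both arguments rest on the inequality $M(\gamma)\leq M(\gamma^L)$, proved via $\deg\gamma\leq L\deg\gamma^L$, together with divisibility of $\tor(\algt)$. The only cosmetic difference is that you invoke attainment to fix a specific optimal root of unity $\eta$ and then take an $L$-th root of it, whereas the paper manipulates the infimum directly and uses divisibility implicitly in the identity $\inf_\zeta M((\zeta\alpha)^L)=\inf_\zeta M(\zeta\alpha^L)$.
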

\begin{proof}
	If $\gamma$ is any algebraic number, then it is well-known that $\deg\gamma \leq L\deg\gamma^L$.  Hence, it follows that
	\begin{equation*}
		M(\gamma) = H(\gamma)^{\deg\gamma} \leq H(\gamma)^{L\deg \gamma^L} = H(\gamma^L)^{\deg\gamma^L} = M(\gamma^L).
	\end{equation*}
	We now have that
	\begin{align*}
		\bar M(\bar\alpha) & = \inf\{M(\zeta\alpha):\zeta\in\tor(\algt)\} \\
			& \leq \inf\{M((\zeta\alpha)^L):\zeta\in\tor(\algt)\} \\
			& = \inf\{M(\zeta\alpha^L):\zeta\in\tor(\algt) \\
			& = \bar M(\alpha^L).
	\end{align*}
	However, we know that $\bar M(\alpha^L) = \bar M(\pi(\alpha^L)) = \bar M(\bar\alpha^L)$ completing the proof.
\end{proof}

It will be very natural in the proof of Theorem \ref{Computation} to consider the strong metric version of $\bar M$.  In other words, If $\bar\alpha\in V$, we define
\begin{equation*}
	\bar M_\infty(\bar\alpha) = \inf\left\{\max_{1\leq n\leq N}\bar M(\bar\alpha_n):N\in\nat,\ \bar\alpha_n\in V,\ \bar\alpha = \prod_{n=1}^N\bar\alpha_n\right\}.
\end{equation*}
As we have stated, our goal for this article is to reduce the computation of $M_\infty(\alpha)$, not $\bar M_\infty(\pi(\alpha))$, to a finite set.  
However, we cannot expect that $M(\alpha) = \bar M(\pi(\alpha))$ in general.  For example, $M(2i) = 4$ while $\bar M(2i) = 2$.
Therefore, it is not immediate that $M_\infty(\alpha) = \bar M_\infty(\pi(\alpha))$ for any algebraic number $\alpha$.  However, the following lemma
shows that these two functions are indeed equal.

\begin{lem} \label{Conversion}
	If $\alpha$ is a non-zero algebraic number then $M_\infty(\alpha) = \bar M_\infty(\pi(\alpha))$.
\end{lem}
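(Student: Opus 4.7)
The plan is to prove the two inequalities $\bar M_\infty(\pi(\alpha)) \leq M_\infty(\alpha)$ and $M_\infty(\alpha) \leq \bar M_\infty(\pi(\alpha))$ separately. The easy direction is the first one: any representation $\alpha = \prod_{n=1}^N \alpha_n$ with $\alpha_n \in \algt$ projects under $\pi$ to a representation $\pi(\alpha) = \prod_{n=1}^N \pi(\alpha_n)$ in $V$, and the definition of $\bar M$ gives $\bar M(\pi(\alpha_n)) \leq M(\alpha_n)$ (take $\zeta = 1$ in the infimum defining $\bar M$). Taking the maximum and then the infimum over all factorizations delivers $\bar M_\infty(\pi(\alpha)) \leq M_\infty(\alpha)$.

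For the reverse inequality, the point is that any factorization of $\pi(\alpha)$ in $V$ can be lifted to a factorization of $\alpha$ in $\algt$ at the cost of a single extra factor that is a root of unity (and so has Mahler measure $1$). Given $\varepsilon > 0$, choose a factorization $\pi(\alpha) = \bar\alpha_1 \cdots \bar\alpha_N$ with $\max_n \bar M(\bar\alpha_n) < \bar M_\infty(\pi(\alpha)) + \varepsilon$. Here I would use \eqref{MBarPower}, which guarantees that for each $n$ there is a representative $\alpha_n \in \pi^{-1}(\bar\alpha_n)$ achieving $M(\alpha_n) = \bar M(\bar\alpha_n)$ exactly (this is where it matters that $\bar M$ is attained, even if $\bar M_\infty$ need not be).

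Now $\pi(\alpha) = \pi(\alpha_1 \cdots \alpha_N)$, so $\alpha = \zeta \cdot \alpha_1 \cdots \alpha_N$ for some $\zeta \in \tor(\algt)$. This is a valid factorization of $\alpha$ in $\algt$ with $N+1$ factors. Since $M(\zeta) = 1$ and $M(\alpha_n) = \bar M(\bar\alpha_n) \geq 1$, the maximum of the Mahler measures of these factors equals $\max_n \bar M(\bar\alpha_n) < \bar M_\infty(\pi(\alpha)) + \varepsilon$. Hence $M_\infty(\alpha) \leq \bar M_\infty(\pi(\alpha)) + \varepsilon$, and letting $\varepsilon \to 0$ finishes the proof.

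I do not expect a significant obstacle here: the only point requiring some care is to lift the approximately optimal factorization in $V$ to a factorization in $\algt$, and the existence of a root of unity realizing the infimum in the definition of $\bar M$, furnished by \eqref{MBarPower}, makes this lifting clean. The extra factor $\zeta$ is the slight subtlety that justifies why passing from products in $\algt$ to products in $V$ introduces no loss, even though $M$ itself is not well-defined on $V$.
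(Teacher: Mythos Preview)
Your proof is correct and follows essentially the same approach as the paper's: both directions are handled identically, using the attainment of the infimum in $\bar M$ (via \eqref{MBarPower}) to lift a factorization in $V$ to one in $\algt$ up to a root of unity. The only cosmetic difference is that you phrase the reverse inequality with an $\varepsilon$-argument, whereas the paper bounds $M_\infty(\alpha)$ by $\max_n \bar M(\bar\alpha_n)$ for an arbitrary factorization and then takes the infimum directly.
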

\begin{proof}
	We see immediately that $\bar M(\pi(\gamma)) \leq M(\gamma)$ for all $\gamma\in\alg$, so it follows that $\bar M_\infty(\pi(\alpha)) \leq M_\infty(\alpha)$.
	
	To prove the opposite inequality, let $\bar\alpha_1,\ldots,\bar\alpha_N \in V$ be such that $\pi(\alpha) = \bar\alpha_1\cdots\bar\alpha_N$.
	Since the infimum in the definition of $\bar M$ is attained, for each $n$ there exist points $\alpha_n\in\pi^{-1}(\bar\alpha_n)$ such that $M(\alpha_n) = \bar M(\bar\alpha_n)$.
	Therefore, we have that
	\begin{equation*}
		\pi(\alpha) = \pi(\alpha_1\cdots\alpha_N) = \pi(\alpha_1)\cdots\pi(\alpha_N) 
	\end{equation*}
	which implies the existence of a root of unity $\zeta$ such that
	\begin{equation*}
		\alpha = \zeta\alpha_1\cdots\alpha_N.
	\end{equation*}  
	Hence, we obtain
	\begin{equation*}
		M_\infty(\alpha) \leq \max\{M(\zeta),M(\alpha_1),\ldots,M(\alpha_N)\} = \max\{\bar M(\bar\alpha_1),\ldots,\bar M(\bar\alpha_N)\}.
	\end{equation*}
	The result follows by taking the infimum of both sides over all factorizations $\pi(\alpha) = \bar\alpha_1\cdots\bar\alpha_N$.
\end{proof}

We now write
\begin{equation*}
	\rad(K_\alpha) = \left\{\gamma\in\algt: \gamma^n\in K_\alpha\ \mathrm{for\ some}\ n\in\nat\right\}.
\end{equation*}
The author showed (see \cite{Samuels}, Theorem 2.1) that, for any representation $\alpha=\alpha_1\cdots\alpha_N$, there exists another
representation $\alpha = \zeta\beta_1\cdots\beta_N$ with $\zeta$ a root of unity, $M(\beta_n)\leq M(\alpha_n)$
and $\beta_n\in\rad(K_\alpha)$ for all $n$.  In particular, as we attempt to compute the value of $M_\infty(\alpha)$ in general,
we need only consider representations of $\alpha$ in $\rad(K_\alpha)$.  

In view of Lemma \ref{Conversion}, this idea extends to $\bar M_\infty$ in the following way.  For any number field $K$,
the set $\rad(K)$ contains the collection of all roots of unity. Therefore, we may write
\begin{equation*}
	S(K) = \rad(K)/\tor(\algt) = \pi(\rad(K))
\end{equation*}
and note that $S(K)$ is a subspace of $V$.  We need only use elements of $S(K_\alpha)$ in order to compute the value of $\bar M_\infty(\alpha)$.

\begin{thm} \label{Reduction}
	Let $\alpha$ be a non-zero algebraic number and assume that $\bar\alpha_1,\ldots,\bar\alpha_N\in V$ satisfy $\pi(\alpha) = \bar\alpha_1\cdots\bar\alpha_N$.
	Then there exists a representation $\pi(\alpha) = \bar\beta_1\cdots\bar\beta_N$ such that
	\begin{equation*}
		\bar\beta_n\in S(K_\alpha)\quad\mathrm{and}\quad \bar M(\bar\beta_n) \leq \bar M(\bar\alpha_n)
	\end{equation*}
	for all $n$.
\end{thm}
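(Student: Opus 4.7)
The plan is to lift the given factorization in $V$ to a factorization in $\algt$, apply the cited result of the author (\cite{Samuels}, Theorem 2.1) to replace the lifted factors by radical ones without increasing Mahler measures, and then push the result back down to $V$. The passage to $V$ at the end will collapse any stray roots of unity, so the cleanest way to avoid an off-by-one in the number of factors is to initially allow an extra root-of-unity factor and then absorb it.

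First I would use \eqref{MBarPower} to choose, for each $n$, a representative $\alpha_n\in\pi^{-1}(\bar\alpha_n)$ with $M(\alpha_n) = \bar M(\bar\alpha_n)$. Since $\pi(\alpha_1\cdots\alpha_N) = \bar\alpha_1\cdots\bar\alpha_N = \pi(\alpha)$, there is a root of unity $\zeta_0$ such that
\begin{equation*}
	\alpha = \zeta_0\,\alpha_1\cdots\alpha_N.
\end{equation*}
This is an $(N+1)$-factor representation of $\alpha$ in $\algt$, so by \cite{Samuels}, Theorem 2.1, we may find another representation $\alpha = \zeta\,\beta_0\,\beta_1\cdots\beta_N$ with $\zeta$ a root of unity, each $\beta_n\in\rad(K_\alpha)$, $M(\beta_0)\leq M(\zeta_0)=1$, and $M(\beta_n)\leq M(\alpha_n)$ for $n\geq 1$. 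Since $\beta_0\in\algt$ satisfies $M(\beta_0)\leq 1$, Kronecker's theorem forces $\beta_0$ to be a root of unity, and hence $\zeta\beta_0$ is as well.

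Setting $\bar\beta_n = \pi(\beta_n)$ for $n=1,\ldots,N$, I would then verify the conclusion. Applying $\pi$ to $\alpha = \zeta\beta_0\beta_1\cdots\beta_N$ annihilates the root-of-unity factor $\zeta\beta_0$, giving $\pi(\alpha) = \bar\beta_1\cdots\bar\beta_N$. Because $\beta_n\in\rad(K_\alpha)$, we have $\bar\beta_n\in\pi(\rad(K_\alpha)) = S(K_\alpha)$. Finally, since $\beta_n\in\pi^{-1}(\bar\beta_n)$,
\begin{equation*}
	\bar M(\bar\beta_n) \leq M(\beta_n) \leq M(\alpha_n) = \bar M(\bar\alpha_n),
\end{equation*}
which is exactly the required inequality.

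The only subtle point, and the one I would emphasize in the writeup, is the bookkeeping around $\zeta_0$: a naive lift of $\bar\alpha_1,\ldots,\bar\alpha_N$ only recovers $\alpha$ up to a root of unity, and that extra root of unity must be either absorbed into one of the factors or, as above, carried as an extra slot through the cited theorem before being killed by $\pi$. Since the cited theorem already does the real work of producing radical factors with controlled Mahler measures, there is no further analytic obstacle.
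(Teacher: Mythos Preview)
Your proposal is correct and follows essentially the same route as the paper: lift to $\algt$ using representatives achieving $\bar M$, apply Theorem~2.1 of \cite{Samuels}, then push down via $\pi$. The only difference is cosmetic bookkeeping around the stray root of unity $\zeta_0$: the paper writes $\alpha = \xi\alpha_1\cdots\alpha_N$ and invokes the cited theorem to produce $\alpha = \zeta\beta_1\cdots\beta_N$ directly (implicitly absorbing $\xi$), whereas you carry $\zeta_0$ as an explicit $(N+1)$-st factor and then observe $M(\beta_0)\leq 1$ forces $\beta_0$ to be a root of unity killed by $\pi$ --- your version is slightly more careful on exactly the point the paper elides.
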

\begin{proof}
	We noted in the introduction that the infimum in the definition of $\bar M$ is always attained.  
	Hence, we may choose points $\alpha_n\in \pi^{-1}(\bar\alpha_n)$ such that $\bar M(\bar\alpha_n) = M(\alpha_n)$ for each $n$.
	It follows that $\pi(\alpha) = \pi(\alpha_1\cdots\alpha_N)$ so there exists a root of unity $\xi$ such that
	\begin{equation*}
		\alpha = \xi\alpha_1\cdots\alpha_N.
	\end{equation*}
	By Theorem 2.1 of \cite{Samuels}, there exists another root of unity $\zeta$ and $\beta_1,\ldots,\beta_N$ satisfying the three conditions
	\begin{enumerate}[(i)]
		\item $\alpha = \zeta\beta_1\cdots\beta_N$,
		\item $\beta_n \in \rad(K_\alpha)$ for all $n$,
		\item $M(\beta_n)\leq M(\alpha_n)$ for all $n$.
	\end{enumerate}
	Now set $\bar\beta_n = \pi(\beta_n)$ so that the above conditions imply
	\begin{enumerate}[(i)]
		\item $\pi(\alpha) = \bar\beta_1\cdots\bar\beta_N$,
		\item $\bar\beta_n \in \pi(\rad(K_\alpha)) = S(K_\alpha)$ for all $n$,
		\item $\bar M(\bar\beta_n) \leq M(\beta_n)\leq M(\alpha_n) = \bar M(\bar\alpha_n)$ for all $n$
	\end{enumerate}
	and the theorem follows.
\end{proof}

\section{Proof of Theorem \ref{Computation}} \label{MainProof}

Our proof is based upon the following observation.  Although it will be used as a lemma in the proof of our main result, we give it here as a theorem
since we believe its statement has independent interest.

\begin{thm} \label{BestInf}
	Let $\alpha$ be a non-zero algebraic number.  If $\bar\gamma \in S(K_\alpha)$ with $\bar M(\bar\gamma) \leq M(\alpha)$ then there exists a non-zero 
	rational number $r$ such that
	\begin{equation} \label{BestInfEq}
		\bar M(\bar\gamma^r) = \inf\{\bar M(\bar\gamma^s):s\in\rat^\times\}
	\end{equation}
	and $\bar\gamma^r \in \bar B(\alpha)$.
\end{thm}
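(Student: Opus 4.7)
The plan is to exploit the algebraic structure of $\pi(K_\alpha^\times)$. Since $K_\alpha^\times/\tor(K_\alpha^\times)$ is a free abelian group (by Dirichlet's unit theorem combined with the finiteness of the ideal class group of $K_\alpha$), the intersection of the one-dimensional $\rat$-subspace $\rat\cdot\bar\gamma \subseteq V$ with $\pi(K_\alpha^\times)$ is a rank-one subgroup, hence cyclic. Writing
\begin{equation*}
	A \;=\; \{\,s\in\rat : \bar\gamma^s\in\pi(K_\alpha^\times)\,\},
\end{equation*}
this says $A = \intg\cdot r_0$ for some positive $r_0\in\rat$.

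Next, for each $s\in\rat^\times$ I would select $\beta_s\in\rad(K_\alpha)$ with $\pi(\beta_s)=\bar\gamma^s$ and $M(\beta_s)=\bar M(\bar\gamma^s)$; such a $\beta_s$ exists by \eqref{MBarPower} together with Theorem~2.1 of \cite{Samuels}, exactly as in the proof of Theorem~\ref{Reduction}. Let $N_s\in\nat$ be the smallest integer with $\beta_s^{N_s}\in K_\alpha$. The technical heart of the argument is to prove
\begin{equation*}
	M(\beta_s^{N_s}) \;=\; M(\beta_s),
\end{equation*}
or equivalently that $x^{N_s}-\beta_s^{N_s}$ is irreducible over $\rat(\beta_s^{N_s})$. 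This is a Capelli-type statement that should follow from the combined minimality of $N_s$ and of $M(\beta_s)$: any reducibility would produce a root-of-unity multiple of $\beta_s$ with strictly smaller exponent which either decreases the Mahler measure (contradicting the min-Mahler property of $\beta_s$) or preserves it (contradicting the minimality of $N_s$).

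Granting this, set $\delta_s = \beta_s^{N_s}\in K_\alpha$, so that $M(\delta_s)=\bar M(\bar\gamma^s)$ and $\pi(\delta_s)=\bar\gamma^{sN_s}\in\pi(K_\alpha^\times)$, forcing $sN_s = m_s r_0$ for some nonzero $m_s\in\intg$. Using Lemma~\ref{IntegerPower} (combined with the identity $\bar M(\bar\alpha)=\bar M(\bar\alpha^{-1})$ to accommodate negative $m_s$) together with the trivial bound $\bar M(\pi(\delta_s))\leq M(\delta_s)$, I would obtain
\begin{equation*}
	\bar M(\bar\gamma^s) \;=\; M(\delta_s) \;\geq\; \bar M(\bar\gamma^{m_s r_0}) \;\geq\; \bar M(\bar\gamma^{r_0}),
\end{equation*}
so $\inf_{s\in\rat^\times}\bar M(\bar\gamma^s) = \bar M(\bar\gamma^{r_0})$. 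Setting $r = r_0 N_{r_0}$ and $\delta = \delta_{r_0}\in K_\alpha$ gives $\pi(\delta)=\bar\gamma^r$ and $M(\delta) = \bar M(\bar\gamma^{r_0})\leq \bar M(\bar\gamma)\leq M(\alpha)$, so $\delta\in\B(\alpha)$ and $\bar\gamma^r\in\bar\B(\alpha)$; a final application of Lemma~\ref{IntegerPower} forces $\bar M(\bar\gamma^r) = \bar M(\bar\gamma^{r_0})$, completing the plan. The main obstacle is the Capelli analysis, which must also handle the exceptional $-4b^4$ case that arises when $4\mid N_s$.
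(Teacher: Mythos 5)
Your route is genuinely different from the paper's.  Where the paper repeatedly invokes Lemma~\ref{HeightInK} (Lemma~3.1 of \cite{Samuels}), you replace it with two ingredients: the structural fact that $K_\alpha^\times/\tor(K_\alpha^\times)$ is free abelian (which forces $A=\{s\in\rat:\bar\gamma^s\in\pi(K_\alpha^\times)\}$ to be cyclic, $A=\intg r_0$), and a Capelli-type irreducibility claim.  The structural observation is correct and would, if completed, give a cleaner identification of the extremal exponent.  But the Capelli step is where the argument actually stalls, and you flag it yourself as ``the main obstacle''; as sketched it does not close.  You need, for an $M$-minimizer $\beta_s\in\pi^{-1}(\bar\gamma^s)$ with $N_s$ minimal such that $\beta_s^{N_s}\in K_\alpha$, that $\deg\beta_s=N_s\deg\beta_s^{N_s}$, equivalently that $x^{N_s}-\beta_s^{N_s}$ is irreducible over $\rat(\beta_s^{N_s})$.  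The proposed dichotomy produces, from a Capelli failure, a root-of-unity multiple $\mu\beta_s$ with $(\mu\beta_s)^{N'}\in K_\alpha$ for some $N'<N_s$, and then asserts that its Mahler measure ``either decreases or is preserved.''  But minimality of $M(\beta_s)$ in the fiber only gives $M(\mu\beta_s)\geq M(\beta_s)$; the case $M(\mu\beta_s)>M(\beta_s)$ (i.e.\ $\deg(\mu\beta_s)>\deg\beta_s$, which certainly happens for generic roots of unity $\mu$) is not excluded, and if it occurs neither horn of your dilemma applies.  (There is also a smaller bookkeeping issue: ``contradicting the minimality of $N_s$'' requires $N_s$ to be defined as minimal over all $M$-minimizers in the fiber, which should be made explicit.)  The paper avoids all of this: Lemma~\ref{HeightInK} supplies a root of unity $\zeta$ and integers $L,S\in\nat$ with $\zeta\gamma^L\in K_\alpha$ and $M(\gamma)=M(\zeta\gamma^L)^S$, where $S$ is permitted to exceed $1$, so one settles for the inequality $M(\zeta\gamma^L)\leq M(\gamma)$ rather than the equality you are aiming for; that weaker inequality, fed through Lemma~\ref{MoveToB} and Lemma~\ref{InfLemma} in a two-stage argument, is enough to land $\bar\gamma^r$ in $\bar\B(\alpha)$ while simultaneously attaining the infimum.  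Until the Capelli equality is actually proved (the $-4c^4$ case included), the proposal has a genuine gap at its central step.
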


The significance of Theorem \ref{BestInf} is that the rational number $r$ 
simultaneously achieves the infimum on the right hand side of \eqref{BestInfEq} and forces $\bar \gamma^r\in \bar B(\alpha)$.
Our proof will require a few lemmas, the first of which is simply Lemma 3.1 of \cite{Samuels}.  Although the proof will be omitted, we include the statement 
here because it will be used very frequently throughout the remainder of the paper.

\begin{lem} \label{HeightInK}
	Let $K$ be a Galois extension of $\rat$.  If $\gamma\in\rad(K)$ then there exists a root of unity $\zeta$ and $L,S\in\nat$
	such that
	\begin{equation*}
		\zeta\gamma^L\in K\quad\mathrm{and}\quad M(\gamma) = M(\zeta\gamma^L)^S.
	\end{equation*}
	In particular, the set
	\begin{equation*}
		\{M(\gamma):\gamma\in\rad(K),\ M(\gamma) \leq C\}
	\end{equation*}
	is finite for every $C \geq 1$.
\end{lem}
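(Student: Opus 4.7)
Take $L$ to be the smallest positive integer such that some root of unity $\zeta$ satisfies $\zeta\gamma^L \in K$; this exists because $\gamma \in \rad(K)$ already provides at least one valid choice (take $\zeta = 1$ and any $n$ with $\gamma^n \in K$).  Set $\alpha = \zeta\gamma^L \in K$ and $S = \deg\gamma/(L\deg\alpha)$.  Because the Weil height is invariant under multiplication by roots of unity, $H(\alpha) = H(\gamma^L) = H(\gamma)^L$, whence
\begin{equation*}
  M(\gamma) = H(\gamma)^{\deg\gamma} = H(\alpha)^{\deg\gamma/L} = H(\alpha)^{S\deg\alpha} = M(\alpha)^S.
\end{equation*}
The substantive content of the first part of the lemma is thus the divisibility $L\deg\alpha \mid \deg\gamma$, which is what ensures $S \in \nat$.

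I would establish this divisibility by a Galois-theoretic argument.  Fix a finite Galois extension $F/\rat$ containing $K$, $\gamma$, and $\zeta$.  For $\sigma \in \mathrm{Gal}(F/K)$, the relation $\sigma(\alpha) = \alpha$ applied to $\alpha = \zeta\gamma^L$ yields $(\sigma(\gamma)/\gamma)^L = \zeta/\sigma(\zeta)$, a root of unity, so $\sigma(\gamma)/\gamma$ is itself a root of unity.  Consequently the $[K(\gamma):K]$ distinct $K$-conjugates of $\gamma$ all take the form $\xi_i\gamma$ with $\xi_i$ a root of unity, and their product $\bigl(\prod_i\xi_i\bigr)\gamma^{[K(\gamma):K]}$---up to sign the constant coefficient of the minimal polynomial of $\gamma$ over $K$---lies in $K$.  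By minimality of $L$ we conclude $L \mid [K(\gamma):K]$.  Since $K/\rat$ is Galois, $[K(\gamma):K] = [\rat(\gamma) : K \cap \rat(\gamma)]$, and a refinement of the same argument---either by choosing $\zeta$ within its Galois orbit over $\rat(\gamma)$ so that $\alpha$ lands in $\rat(\gamma)$, or by tracking how the factor $[\rat(\gamma,\zeta):\rat(\gamma)]$ relates $\deg\alpha$ to $[K\cap\rat(\gamma):\rat]$---promotes this to $L\deg\alpha \mid \deg\gamma$.

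The finiteness assertion then follows quickly.  Given $\gamma \in \rad(K)$ with $M(\gamma) \leq C$, the identity $M(\gamma) = M(\alpha)^S$ with $S \in \nat$ forces $M(\alpha) \leq C$, and since $[K:\rat]$ is fixed, Northcott's theorem produces only finitely many candidates $\alpha \in K$.  For each such $\alpha$ with $M(\alpha) > 1$ the exponent $S$ is bounded above by $\log C/\log M(\alpha)$; and if $M(\alpha) = 1$ then Kronecker's theorem forces $\alpha$, and hence $\gamma$, to be roots of unity, so that $M(\gamma) = 1$.  In either case only finitely many values of $M(\alpha)^S$ can occur.

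I expect the principal obstacle to be the final refinement from $L \mid [K(\gamma):K]$ to $L\deg\alpha \mid \deg\gamma$.  The $K$-conjugate argument is immediate and delivers the weaker statement, but the sharper divisibility is subtle when $\alpha \notin \rat(\gamma)$---a situation that genuinely occurs, e.g., for $\gamma = \sqrt{2}$, $K = \rat(i)$, $\alpha = 1+i$---and requires a careful exploitation of the fact that $\alpha$ and any $\sigma(\alpha)$ with $\sigma \in \mathrm{Gal}(F/\rat(\gamma))$ differ only by a root of unity.
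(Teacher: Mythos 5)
The paper itself does not prove this lemma---it simply cites Lemma 3.1 of \cite{Samuels}---so there is no in-paper argument to compare against. Evaluated on its own terms, your write-up has a genuine gap at exactly the point you flag, and the gap is more serious than a ``refinement'' that can be waved through.

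Your reduction is correct: once one knows $\zeta\gamma^L\in K$, the height identity $H(\zeta\gamma^L)=H(\gamma)^L$ gives $M(\gamma)=M(\zeta\gamma^L)^S$ with $S=\deg\gamma/(L\deg(\zeta\gamma^L))$, so the whole content of the first assertion is the integrality of $S$. But as written, $S$ is \emph{not} determined by $L$ alone: for a fixed (even minimal) $L$, the admissible roots of unity $\zeta$ form a coset of the roots of unity lying in $K$, and different representatives of that coset give different values of $\deg(\zeta\gamma^L)$, hence different $S$. A concrete instance: take $K=\rat(\zeta_5)$ and $\gamma=2^{1/3}$, so that $\deg\gamma=3$ and the minimal $L$ is $3$. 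With $\zeta=1$ one gets $\alpha=2$, $\deg\alpha=1$, $S=1$, and indeed $M(\gamma)=2=M(2)$. But with the equally valid choice $\zeta=\zeta_5$ one gets $\alpha=2\zeta_5$, $\deg\alpha=4$, $M(\alpha)=16$, and $S=\tfrac{3}{3\cdot 4}=\tfrac14\notin\nat$. So the claimed divisibility $L\deg\alpha\mid\deg\gamma$ is simply false for a generic choice of $\zeta$; one must single out a specific representative, and the proof must explain why that representative exists and why it works.

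Your Galois sketch does correctly produce $L\mid [K(\gamma):K]$ via the observation that $\sigma(\gamma)/\gamma$ is a root of unity for $\sigma\in\mathrm{Gal}(F/K)$, but that is a statement about relative degrees over $K$, and passing from it to the absolute divisibility $L\deg\alpha\mid\deg\gamma$ is exactly where the difficulty lies. The two routes you mention (``choosing $\zeta$ within its Galois orbit over $\rat(\gamma)$ so that $\alpha$ lands in $\rat(\gamma)$,'' or ``tracking $[\rat(\gamma,\zeta):\rat(\gamma)]$'') are plausible starting points but are not carried out, and the first is not obviously available: forcing $\alpha\in\rat(\gamma)$ requires $\zeta\in\rat(\gamma)$, which may force one to abandon the minimal $L$ and work with a larger exponent, at which point the degree bookkeeping must be redone from scratch. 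The finiteness deduction at the end is fine given the first part, but as it stands the first part is not proved: the key integrality of $S$, which is the entire substance of the lemma, remains an assertion.
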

\begin{proof}
	See Lemma 3.1 of \cite{Samuels}.
\end{proof}

In view of Theorem \ref{Reduction}, it will be important to consider representations of $\pi(\alpha)$ having elements in $S(K_\alpha)$.  
Our next lemma shows that any such element with sufficiently small Mahler measure must always have an integer power in the finite set $\bar B(\alpha)$.

\begin{lem} \label{MoveToB}
	Let $\alpha$ be a non-zero algebraic number.  If $\bar\gamma\in S(K_\alpha)$ with $\bar M(\bar\gamma)\leq M(\alpha)$ then 
	there exists a positive integer $L$ such that $\bar\gamma^L\in \bar\B(\alpha)$.
\end{lem}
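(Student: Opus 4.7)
The goal is to locate a specific integer power of $\bar\gamma$ inside the finite set $\bar\B(\alpha)$. Since $\bar\gamma\in S(K_\alpha) = \pi(\rad(K_\alpha))$, there is some lift of $\bar\gamma$ living in $\rad(K_\alpha)$, and since $\rad(K_\alpha)$ is stable under multiplication by roots of unity, I can in fact choose a lift $\gamma\in\rad(K_\alpha)$ that attains the infimum defining $\bar M(\bar\gamma)$; that is, $\pi(\gamma) = \bar\gamma$ and $M(\gamma) = \bar M(\bar\gamma)$. (The fact that the infimum is attained was noted in the introduction via equation \eqref{MBarPower}.)

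With such a lift in hand, I apply Lemma \ref{HeightInK} to $\gamma$. This produces a root of unity $\zeta$ and positive integers $L,S$ with $\zeta\gamma^L\in K_\alpha$ and $M(\gamma) = M(\zeta\gamma^L)^S$. The element $\zeta\gamma^L$ is the natural candidate witnessing $\bar\gamma^L \in \bar\B(\alpha)$: it lies in $K_\alpha$, and $\pi(\zeta\gamma^L) = \pi(\gamma)^L = \bar\gamma^L$. All that remains is to check the Mahler-measure bound.

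For this, the computation is a one-liner: extracting $S$-th roots gives
\begin{equation*}
	M(\zeta\gamma^L) = M(\gamma)^{1/S} \leq M(\gamma) = \bar M(\bar\gamma) \leq M(\alpha),
\end{equation*}
where the first inequality uses $M(\gamma)\geq 1$ and $S\geq 1$, and the final inequality is the hypothesis. Consequently $\zeta\gamma^L\in\B(\alpha)$, whence $\bar\gamma^L = \pi(\zeta\gamma^L)\in\bar\B(\alpha)$, as desired.

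There is no substantive obstacle here; the lemma is essentially a bookkeeping consequence of Lemma \ref{HeightInK} combined with the attainment of the infimum in the definition of $\bar M$. The only subtle point is to choose the lift of $\bar\gamma$ optimally at the outset, so that the Mahler measure of $\gamma$ is controlled by the given hypothesis $\bar M(\bar\gamma)\leq M(\alpha)$ rather than by a potentially much larger value.
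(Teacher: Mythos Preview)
Your proof is correct and follows essentially the same approach as the paper's own proof: choose a lift $\gamma\in\rad(K_\alpha)$ attaining $\bar M(\bar\gamma)$, apply Lemma~\ref{HeightInK}, and use the resulting relation $M(\gamma)=M(\zeta\gamma^L)^S$ together with $M(\gamma)\geq 1$ to bound $M(\zeta\gamma^L)\leq M(\alpha)$. Your justification that any preimage of $\bar\gamma$ lies in $\rad(K_\alpha)$ (since $\rad(K_\alpha)$ is closed under multiplication by roots of unity) is a small clarification the paper leaves implicit.
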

\begin{proof}
	We know that the infimum in the definition of $\bar M$ is attained, so we may choose $\gamma\in\pi^{-1}(\bar\gamma)$ such that $M(\gamma) = \bar M(\bar\gamma)$.  
	This means also that $\gamma\in\rad(K_\alpha)$, so Lemma \ref{HeightInK} gives the existence of a root of unity $\zeta$ and $L,S\in\nat$ such that
	\begin{equation*}
		\zeta\gamma^L \in K^\times\quad\mathrm{and}\quad M(\gamma) = M(\zeta\gamma^L)^S.
	\end{equation*}
	Hence,
	\begin{equation*}
		M(\zeta\gamma^L) \leq M(\zeta\gamma^L)^S = \bar M(\bar\gamma) \leq M(\alpha)
	\end{equation*}
	and it follows that $\zeta\gamma^L\in\B(\alpha)$ from the defintion of $\B(\alpha)$.  Now we may conclude that
	\begin{equation*}
		\bar\gamma^L = \pi(\gamma)^L = \pi(\zeta\gamma^L) \in \bar \B(\alpha)
	\end{equation*}
	completing the proof.
\end{proof}

In the introduction, we noted that the infimum in the definition of $\bar M$ is always attained.  In the proof of our Theorem, it will be useful to know that
the infimum of the set $\{\bar M(\bar\alpha^s):s\in \rat^\times\}$ is also attained.  We establish this fact in the following lemma.

\begin{lem} \label{InfLemma}
	If $\bar\gamma\in V$ then there exists a non-zero rational number $r$ such that
	\begin{equation} \label{InfimumPowerLemmaEq}
		\bar M(\bar\gamma^r) = \inf\{\bar M(\bar\gamma^s):s\in \rat^\times\}.
	\end{equation}
\end{lem}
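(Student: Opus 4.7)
The plan is to combine Lemma \ref{HeightInK} with the observation that every representative of $\bar\gamma^s$, for rational $s$, lies in the radical of a fixed Galois number field, thereby reducing the set of attainable values of $\bar M(\bar\gamma^s)$ to a finite set. First, I would fix any representative $\gamma\in\pi^{-1}(\bar\gamma)$ and let $K$ be a Galois extension of $\rat$ containing $\gamma$. Writing a general $s\in\rat^\times$ as $p/q$ with $q\in\nat$, any element of $\pi^{-1}(\bar\gamma^s)$ has the form $\zeta\cdot\gamma^{p/q}$, where $\zeta$ is a root of unity and $\gamma^{p/q}$ denotes any fixed $q$-th root of $\gamma^p$. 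Since $(\gamma^{p/q})^q=\gamma^p\in K$ and each root of unity lies in $\rad(K)$, while $\rad(K)$ is closed under multiplication, one obtains $\pi^{-1}(\bar\gamma^s)\subseteq\rad(K)$.

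Next, let $I=\inf\{\bar M(\bar\gamma^s):s\in\rat^\times\}$, which is finite since $\bar M(\bar\gamma)$ itself lies in the set. As noted in the introduction, the infimum in the definition of $\bar M$ is always attained, so for every $s\in\rat^\times$ with $\bar M(\bar\gamma^s)\leq I+1$ I may choose $\delta_s\in\pi^{-1}(\bar\gamma^s)$ satisfying $M(\delta_s)=\bar M(\bar\gamma^s)\leq I+1$. By the previous paragraph, $\delta_s\in\rad(K)$.

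The last step is to invoke Lemma \ref{HeightInK}, which guarantees that the set $\{M(\delta):\delta\in\rad(K),\ M(\delta)\leq I+1\}$ is finite. Hence the collection of values
\begin{equation*}
  \{\bar M(\bar\gamma^s):s\in\rat^\times,\ \bar M(\bar\gamma^s)\leq I+1\}
\end{equation*}
is a finite subset of the interval $[I,I+1]$, so it must contain its own infimum $I$. Any $r\in\rat^\times$ realizing this value yields \eqref{InfimumPowerLemmaEq}.

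The main point to justify carefully will be the inclusion $\pi^{-1}(\bar\gamma^s)\subseteq\rad(K)$: because the scalar multiplication in the $\rat$-vector space $V$ determines $\gamma^{p/q}$ only modulo torsion, one must verify that both the root-of-unity factor and any chosen $q$-th root of $\gamma^p$ individually belong to $\rad(K)$, and that $\rad(K)$ is closed under multiplication. Once this setup is in place, Lemma \ref{HeightInK} provides the attainment of the infimum almost for free.
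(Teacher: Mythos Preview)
Your argument is correct and follows essentially the same route as the paper's proof: both fix a representative $\gamma$, observe that every element of $\pi^{-1}(\bar\gamma^s)$ (in particular one achieving $\bar M(\bar\gamma^s)$) lies in $\rad(K_\gamma)$, and then invoke Lemma~\ref{HeightInK} to deduce that the relevant set of values below a fixed bound is finite, hence attains its infimum. Your version is slightly more explicit in verifying the inclusion $\pi^{-1}(\bar\gamma^s)\subseteq\rad(K)$ and the multiplicative closure of $\rad(K)$, but the underlying idea is identical.
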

\begin{proof}
	Let $C$ be a positive real number strictly greater than the right hand side of \eqref{InfimumPowerLemmaEq}.  We claim that
	\begin{equation}\label{InRad}
		\{\bar M(\bar\gamma^s): s\in\rat^\times,\ \bar M(\bar\gamma^s)\leq C\} \subseteq \{M(\delta):\delta\in\rad(K_\gamma),\ M(\delta)\leq C\}.
	\end{equation}
	To see this, assume that $s\in\rat^\times$ is such that $\bar M(\bar\gamma^s) \leq C$ and let $\gamma\in \pi^{-1}(\bar\gamma)$.  
	Now select integers $s_1$ and $s_2$, with $s_2\ne 0$, such that $s = s_1/s_2$
	and choose $\beta\in\alg^\times$ such that $\beta^{s_2} = \gamma^{s_1}$.  It follows immediately that $\beta\in\rad(K_\gamma)$.
	
	We know that $\pi(\beta) = \bar\gamma^s$, which implies that $\bar M(\beta) = \bar M(\bar\gamma^s)$.  Further, there must
	exist a root of unity $\zeta$ such that $\bar M(\beta) = M(\zeta\beta)$.  Therefore, we have that
	\begin{equation*}
		\bar M(\bar\gamma^s) = M(\zeta\beta).
	\end{equation*}
	However, since $\beta\in\rad(K_\gamma)$, we know that $\zeta\beta$ also belongs to $\rad(K_\gamma)$ verifying \eqref{InRad}.

	According to Lemma \ref{HeightInK}, this set is finite implying that the left hand side of \eqref{InRad} is also finite.  Hence, its infimum must
	always be attained, completing the proof.
\end{proof}

Before we proceed with the proof of Theorem \ref{BestInf}, we pause momentarily to examine our results so far.  In view of Lemma \ref{MoveToB}, we have shown that there exists
a {\bf positive integer} $L$ such that $\bar\gamma^L\in \bar B(\alpha)$.  Furthermore, Lemma \ref{InfLemma} shows that the infimum on the right hand side of
\eqref{BestInfEq} is attained.  In other words, we have already established the two conclusions of Theorem \ref{BestInf} for {\bf possibly distinct} rational numbers.
It remains to prove the existence of a non-zero rational number $r$ that {\bf simultaneously} achieves the infimum on the right hand side of \eqref{BestInfEq} and forces 
$\bar \gamma^r\in \bar B(\alpha)$.

\begin{proof}[Proof of Theorem \ref{BestInf}]
	By Lemma \ref{MoveToB}, there exists a postive integer $L'$ such that $\bar\gamma^{L'}\in\bar B(\alpha)$.  Set $\bar b = \bar\gamma^{L'}$ and note that
	by Lemma \ref{InfLemma}, there exists a non-zero rational number $r'$ such that
	\begin{equation} \label{RPrime}
		\bar M(\bar b^{r'}) = \inf\{M(\bar b^s):s\in\rat^\times\}.
	\end{equation}
	Now choose a point $b\in\B (\alpha)$ such that $\pi(b) = \bar b$ and non-zero integers $s'$ and $t'$ such that $r' = s'/t'$.  Further, we select
	a non-zero algebraic number $c$ such that
	\begin{equation} \label{CEquation}
		c^{t'} = b^{s'}
	\end{equation}
	which yields immediately
	\begin{equation} \label{OtherCEquation}
		\pi(c) = \bar b^{r'}.
	\end{equation}
	We know that there exists a root of unity $\zeta$ such that
	\begin{equation*}
		M(\zeta c) = \bar M(c) = \bar M(\pi(c))
	\end{equation*}
	which yields immediately
	\begin{equation*} 
		M(\zeta c) = \bar M(\bar b^{r'}).
	\end{equation*}
	Furthermore, we have that
	\begin{equation*}
		\bar M(\bar b^{r'}) = \inf\{M(\bar b^s):s\in\rat^\times\} \leq \bar M(\bar b) = \bar M(b) \leq M(b).
	\end{equation*}
	Then since $b\in \B(\alpha)$, we may apply the definition of $\B (\alpha)$ to see that $M(b) \leq M(\alpha)$ implying that
	\begin{equation} \label{BoundedByAlpha}
		M(\zeta c) = \bar M(\bar b^{r'}) \leq M(\alpha).
	\end{equation}
	
	Of course, we also have that $b\in K_\alpha$ since $b\in \B(\alpha) \subseteq K_\alpha$.  So by \eqref{CEquation} we conclude that $c\in\rad(K_\alpha)$
	and we obtain immediately that $\zeta c\in \rad(K_\alpha)$.
	By Lemma \ref{HeightInK}, there exists another root of unity $\xi$, as well as $L, S\in \nat$, such that
	\begin{equation} \label{ZetaC}
		\xi(\zeta c)^L\in K_\alpha\quad\mathrm{and}\quad M(\zeta c) = M(\xi(\zeta c)^L)^S.
	\end{equation}
	Hence we apply \eqref{BoundedByAlpha} to conlude that
	\begin{equation*}
		M(\xi(\zeta c)^L) \leq M(\xi(\zeta c)^L)^S = M(\zeta c) \leq M(\alpha)
	\end{equation*}
	which implies immediately, by the defintion of $\B(\alpha)$, that $\xi(\zeta c)^L \in \B(\alpha)$.
	Using \eqref{OtherCEquation} and the fact that $\pi$ is a group homomorphism, we find that
	\begin{equation*} 
		\bar b^{r'L} = \pi(c)^L = \pi(\xi(\zeta c)^L) \in \bar\B(\alpha).
	\end{equation*}
	Now set $r_0 = r'L$ so we have that
	\begin{equation} \label{FinalContainment}
		\bar b^{r_0}\in \bar\B(\alpha).
	\end{equation}
	
	It is obvious that
	\begin{equation*}
			\bar M(\bar b^{r_0}) \geq \inf\{M(\bar b^s):s\in\rat^\times\}
	\end{equation*}
	since $r_0\in \rat^\times$.  Using again \eqref{OtherCEquation}, we see that $\bar b^{r_0} = \pi(c)^L$ and we obtain
	\begin{equation*}
		\bar M(\bar b^{r_0})  = \bar M(\pi(c)^L) = \bar M(\pi(\xi(\zeta c)^L)) \leq M(\xi(\zeta c)^L) \leq M(\xi(\zeta c)^L)^S.
	\end{equation*}
	Then we apply the right hand side of \eqref{ZetaC} as well as \eqref{BoundedByAlpha} to find that
	\begin{equation*}
		M(\xi(\zeta c)^L)^S = M(\zeta c)= \bar M(\bar b^{r'})
	\end{equation*}
	which, by \eqref{RPrime}, yields
	\begin{equation*}
		\bar M(\bar b^{r_0}) \leq \bar M(\bar b^{r'})= \inf\{M(\bar b^s):s\in\rat^\times\}.
	\end{equation*}
	We have finally shown that
	\begin{equation} \label{FinalInfimum}
		\bar M(\bar b^{r_0}) = \inf\{M(\bar b^s):s\in\rat^\times\}.
	\end{equation}
	
	Now replacing $\bar b$ by $\bar\gamma^{L'}$ in both \eqref{FinalContainment} and \eqref{FinalInfimum}, we obtain that
	$\bar\gamma^{r_0 L'} \in \bar\B(\alpha)$ and
	\begin{equation*}
		\bar M(\bar \gamma^{r_0 L'}) = \inf\{M(\bar \gamma^{L's}):s\in\rat^\times\} = \inf\{M(\bar \gamma^{s}):s\in\rat^\times\}.
	\end{equation*}
	Then setting $r = r_0 L'$ we complete the proof.
\end{proof}

We are now prepared to prove Theorem \ref{Computation}.

\begin{proof}[Proof of Theorem \ref{Computation}]
According to Lemma \ref{Conversion}, we have that $M_\infty(\alpha) = \bar M_\infty(\pi(\alpha))$, so it is enough to show that
$\bar M_\infty(\pi(\alpha)) = \bar M(\bar b_J)$.

We proceed by proving that $\bar M_\infty(\pi(\alpha)) \geq \bar M(\bar b_J)$.  To see this, assume that $\bar\alpha_1,\ldots,\bar\alpha_N\in V$
are such that $\pi(\alpha) = \bar\alpha_1\cdots\bar\alpha_N$.  We will show that
\begin{equation} \label{NonInfLower}
	\max\{\bar M(\bar\alpha_1),\ldots,\bar M(\bar\alpha_N)\} \geq \bar M(\bar b_J).
\end{equation}
We may assume that $\bar M(\bar\alpha_n) \leq \bar M(\pi(\alpha))$, and moreover, by Theorem \ref{Reduction} we may assume without loss of generality that 
$\bar\alpha_1,\ldots,\bar\alpha_N\in S(K_\alpha)$.

By Theorem \ref{BestInf}, there exist non-zero rational numbers $r_n$ such that.
\begin{equation} \label{HomeOfAlpha}
	\bar\alpha_n^{r_n}\in\bar\B(\alpha)\quad\mathrm{and}\quad \bar M(\bar\alpha_n^{r_n}) = \inf\{\bar M(\bar\alpha_n^{s}): s\in \rat^\times\}.
\end{equation}
Hence, for each $n$ there exists an index $\ell_n$ such that $\bar\alpha^{r_n} = \bar b_{\ell_n}$ so we have that
\begin{equation*}
	\pi(\alpha) = \prod_{n=1}^N \bar b_{\ell_n}^{1/r_n}.
\end{equation*}
Therefore, if $\ell_n < J$ for all $n$, then $\pi(\alpha) \in \spa\{\bar b_1,\ldots,\bar b_{J-1}\}$, a contradiction.  So there must exist some index $m$
such that $\ell_m \geq J$.  Then by our ordering of elements in $\bar\B(\alpha)$ we get that $\bar M(\bar b_{\ell_m}) \geq \bar M(\bar b_J)$.
It then follows from the right hand side of \eqref{HomeOfAlpha} that
\begin{equation*}
	\bar M(\bar\alpha_m) \geq \bar M(\bar\alpha_m^{r_m}) = \bar M(\bar b_{\ell_m}) \geq \bar M(\bar b_J)
\end{equation*}
verifying \eqref{NonInfLower}.   Now take the infimum of both sides of \eqref{NonInfLower} over all representations of $\pi(\alpha)$ so that we obtain
$\bar M_\infty(\pi(\alpha)) \geq \bar M(\bar b_J)$.

We must now verify that $\bar M_\infty(\pi(\alpha)) \leq \bar M(\bar b_J)$.  We know that $$\pi(\alpha) \in \spa\{\bar b_1,\cdots,\bar b_{J}\}$$ so that there exist
non-zero rational numbers $r_j$ such that
\begin{equation*}
	\pi(\alpha) = \bar b_1^{r_1}\cdots \bar b_J^{r_J}.
\end{equation*}
Now write $r_j = s_j/t_j$ where $a_i\in\nat$ and $t_j\in \intg\setminus\{0\}$ for all $j$.  Now we have that
\begin{equation*}
	\pi(\alpha) = \prod_{j=1}^J \bar b_j^{s_j/t_j} = \prod_{j=1}^J \underbrace{\bar b_j^{1/t_j}\cdots \bar b_j^{1/t_j}}_{s_j \mathrm{times}}
\end{equation*}
implying that
\begin{equation} \label{UpperBound1}
	\bar M_\infty(\pi(\alpha)) \leq \max\{\bar M(\bar b_1^{1/t_1}),\ldots,\bar M(\bar b_J^{1/t_J})\}.
\end{equation}
By Lemma \ref{IntegerPower}, we have that $\bar M(\bar b_j^{1/t_j}) \leq \bar M(\bar b_j)$ for all $j$ so that \eqref{UpperBound1} implies that
\begin{equation*}
	\bar M_\infty(\pi(\alpha)) \leq \max\{\bar M(\bar b_1),\ldots,\bar M(\bar b_J)\} = \bar M(\bar b_J)
\end{equation*}
completing the proof.
\end{proof}

We continue now with the proof of Corollary \ref{SimpleComputation}, our modified version of Theorem \ref{Computation}.

\begin{proof}[Proof of Corollary \ref{SimpleComputation}]
	We note first that $\pi(\B) = \bar \B(\alpha)$ which implies immediately that
	\begin{equation*}
		\bar \B(\alpha) = \{\pi(b_1),\ldots,\pi(b_N)\}
	\end{equation*}
	and we clearly have that
	\begin{equation*}
		\bar M(\pi(b_1)) \leq \cdots \leq \bar M(\pi(b_N)).
	\end{equation*}
	We know there exist integers $r_1,\ldots, r_J$ and a positive integer $s$ such that $\alpha^s = b_1^{r_1}\cdots b_J^{r_J}$.  This means that
	\begin{equation*}
		\pi(\alpha) = \pi(b_1)^{r_1/s}\cdots \pi(b_J)^{r_J/s}
	\end{equation*}
	and $\pi(\alpha) \in \spa\{\pi(b_1),\ldots,\pi(b_J)\}$.
	
	If $\pi(\alpha) \in \spa\{\pi(b_1),\ldots,\pi(b_{J-1})\}$ then there exists integers $r_1,\ldots, r_{J-1}$ and positive integers $s_1,\ldots,s_{J-1}$
	such that
	\begin{equation*}
		\pi(\alpha) = \pi(b_1)^{r_1/s_1}\cdots \pi(b_{J-1})^{r_{J-1}/s_{J-1}}.
	\end{equation*}
	Setting $s' = s_1\cdots s_{J-1}$ and 
	\begin{equation*}
		k_j = r_j\prod_{i\ne j} s_i
	\end{equation*}
	we find that $\pi(\alpha)^{s'} = \pi(b_1)^{k_1}\cdots \pi(b_{J-1})^{k_{J-1}}$.  This yields immediately that there exists a root of unity $\zeta$ such that
	\begin{equation*}
		\zeta\alpha^{s'} = b_1^{k_1}\cdots b_{J-1}^{k_{J-1}}.
	\end{equation*}
	choosing $\ell$ such that $\zeta^{\ell} = 1$ and setting $s = \ell s'$, we see that $\alpha^{s} \in \langle b_1,\ldots,b_{J-1} \rangle$, a contradiction.   Hence, we get that
	\begin{equation*}
		\pi(\alpha) \not\in \spa\{\pi(b_1),\ldots,\pi(b_{J-1})\}.
	\end{equation*}
	By Theorem \ref{Computation}, we conclude that $M_\infty(\alpha) = \bar M(\pi(b_J)) = \bar M(b_J)$.
\end{proof}

Before we continue with the proof of our results in section \ref{Applications}, we establish Corollary \ref{RationalAlg}.

\begin{proof}[Proof of Corollary \ref{RationalAlg}]
	Suppose $\alpha = a/b$ with $a$ relatively prime to $b$.  Hence, we know that $M(\alpha) = \max\{|a|,|b|\}$.  Let
	\begin{equation*}
		\B = \left\{ \frac{c}{d} : \gcd(c,d) = 1\ \mathrm{and}\ \max\{|c|,|d|\} \leq M(\alpha)\right\}
	\end{equation*}
	and note that $\pi(\B) = \bar B(\alpha)$.  Also, it is clear that $\bar M(c/d) = M(c/d)$ for all $c/d\in\B$
	since the degree of $c/d$ is already as small as possible.
	
	Suppose that $p$ is the largest prime dividing $a$ or $b$.  Of course, $p\leq M(\alpha)$ so that $p\in \B$.  Now assume that
	\begin{equation*}
		\B = \{b_1,\ldots,b_J = p,\ldots,b_N\}
	\end{equation*}
	with
	\begin{equation} \label{RationalOrdering}
		M(b_1) \leq \cdots \leq M(b_N).
	\end{equation}
	We may assume further that $M(b_{J-1}) < M(b_J) = p$, because otherwise, we may switch $b_{J-1}$ and $b_J$ while still satisfying \eqref{RationalOrdering}.
	We can repeat this process until $M(b_{J-1}) < M(b_J)$.
	
	Since $p$ is the largest prime dividing $a$ or $b$, it is clear that all primes dividing $a$ or $b$ must appear in $\{b_1,\ldots,b_J \}$.
	Hence, we have that $\alpha\in \langle b_1,\ldots,b_J \rangle$.
	
	Now assume that there exists a non-zero integer $s$
	such that $\alpha^s \in \langle b_1,\ldots,b_{J-1} \rangle$.  Since $M(b_j) < p$ whenever $1\leq j\leq J-1$, we know that $|b_j|_p < p$ by definition of the Mahler measure.
	This implies immediately that $|b_j|_p \leq 1$.  Similarly, we have that $M(b_j^{-1}) < p$ whenever $1\leq j\leq J-1$, so that $|b_j^{-1}|_p < p$.  It now follows that
	$|b_j|_p = 1$.  Since $\alpha^s\in \langle b_1,\ldots,b_{J-1} \rangle$, we conclude that $|\alpha|_p = 1$ a contradiction.  Therefore, we must have that
	$\alpha^s \not\in \langle b_1,\ldots,b_{J-1} \rangle$ for any positive integer $s$.
	
	Finally, it follows from Corollary \ref{SimpleComputation} that $M_\infty(\alpha) = M(b_J) = p$.
\end{proof}

\section{Proofs from section \ref{Applications}} \label{ApplicationsProofs}

We begin our proof of Theorem \ref{RestrictedListComputing} with a lemma.

\begin{lem} \label{BAlg}
	Suppose that $\alpha$ is a non-zero algebraic number.  If $\gamma\in \B_N(\alpha)$ has minimal polynomial $a_Nx^N + \cdots a_1x + a_0$ over $\intg$ then
	\begin{equation*}
		|a_n| \leq \binom{N}{n} M(\alpha).
	\end{equation*}
\end{lem}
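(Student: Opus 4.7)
The plan is to bound each coefficient $a_n$ by expressing it as (up to sign) $a_N$ times an elementary symmetric polynomial in the conjugates of $\gamma$, and then use the definition of the Mahler measure in \eqref{PolynomialMahler} together with $M(\gamma) \leq M(\alpha)$.

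First, I would write
\begin{equation*}
  a_N x^N + \cdots + a_1 x + a_0 = a_N \prod_{i=1}^{N}(x - \gamma_i),
\end{equation*}
where $\gamma_1,\ldots,\gamma_N$ are the conjugates of $\gamma$ over $\rat$. Expanding the product and matching coefficients yields
\begin{equation*}
  a_n = (-1)^{N-n} a_N \, e_{N-n}(\gamma_1,\ldots,\gamma_N),
\end{equation*}
where $e_k$ denotes the $k$-th elementary symmetric polynomial.

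Next, I would estimate $|e_{N-n}(\gamma_1,\ldots,\gamma_N)|$ term by term. Each summand in $e_{N-n}$ is a product of $N-n$ distinct $\gamma_i$'s, so its absolute value is bounded by the product of $\max\{1,|\gamma_i|\}$ over the indices appearing, which in turn is at most $\prod_{i=1}^N \max\{1,|\gamma_i|\}$. Since the number of such summands equals $\binom{N}{N-n} = \binom{N}{n}$, we obtain
\begin{equation*}
  |a_n| \;\leq\; |a_N| \binom{N}{n} \prod_{i=1}^N \max\{1,|\gamma_i|\} \;=\; \binom{N}{n} M(\gamma),
\end{equation*}
by \eqref{PolynomialMahler} applied to the minimal polynomial of $\gamma$. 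Finally, the hypothesis $\gamma \in \B_N(\alpha) \subseteq \B(\alpha)$ gives $M(\gamma) \leq M(\alpha)$, completing the bound.

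There is no serious obstacle here; the argument is a routine manipulation of elementary symmetric polynomials combined with the definition of the Mahler measure. The only minor subtlety worth double-checking is the symmetry $\binom{N}{N-n} = \binom{N}{n}$, which ensures that the bound matches the form stated in the lemma.
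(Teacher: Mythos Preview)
Your proposal is correct and follows essentially the same route as the paper's proof: both express the coefficients via elementary symmetric polynomials in the conjugates of $\gamma$, bound each monomial by $\prod_{i=1}^N \max\{1,|\gamma_i|\}$, count the $\binom{N}{n}$ summands, and finish with $M(\gamma)\leq M(\alpha)$. If anything, your version is slightly more precise in writing $a_n = (-1)^{N-n} a_N\, e_{N-n}(\gamma_1,\ldots,\gamma_N)$ and invoking the symmetry $\binom{N}{N-n}=\binom{N}{n}$, whereas the paper glosses over this indexing.
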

\begin{proof}
	If $I = \{n_1,\ldots,n_M\}\subseteq \{1,\ldots,N\}$ then we write 
	\begin{equation*}
		\gamma_I = \prod_{m=1}^M \gamma_{n_m}
	\end{equation*}
	and notice immediately that
	\begin{equation} \label{GammaConsolidation}
		|\gamma_I| = \left| \prod_{m=1}^M \gamma_{n_m} \right| \leq \prod_{m=1}^M \max\{1,|\gamma_{n_m}|\}
			\leq \prod_{n=1}^N \max\{1,|\gamma_{n}|\}.
	\end{equation}
	Using our above notation, we may write the elementary symmetric polynomials evaluated at $\gamma_1,\ldots,\gamma_N$ as
	\begin{equation*}
		e_i(\gamma_1,\ldots,\gamma_N) = \sum_{|I| = i}\gamma_I,
	\end{equation*}
	where $0\leq i\leq N$.  Using \eqref{GammaConsolidation}, we find that
	\begin{align*}
		|e_i(\gamma_1,\ldots,\gamma_N)| & \leq \sum_{|I| = i}|\gamma_I| \\
			& \leq  \sum_{|I| = i}\left(\prod_{n=1}^N \max\{1,|\gamma_{n}|\}\right) \\
			& = \binom{N}{i}\cdot\prod_{n=1}^N \max\{1,|\gamma_{n}|\}\\
			& = \binom{N}{i}\cdot\frac{M(\gamma)}{|a_N|}
	\end{align*}
	It is well-known that $a_n = a_N\cdot e_n(\gamma_1,\ldots,\gamma_N)$ for every $n$ so that
	\begin{equation*}
		|a_n| \leq \binom{N}{n} M(\gamma) \leq \binom{N}{n} M(\alpha).
	\end{equation*}
	where the last inequality follows from the fact that $\gamma\in \B(\alpha)$.
\end{proof}
	
Before we proceed with the proof of Theorem \ref{RestrictedListComputing}, we make one remark regarding Lemma \ref{BAlg}.
In \eqref{BObstacle}, we noted the existence of a highly inefficient method for writing down polynomials whose roots belong to $\B(\alpha)$.
Indeed, Lemma \ref{BAlg} shows that all points $\gamma\in \B(\alpha)$, regardless of their degree, have a minimal polynomial belonging to the set
\begin{equation} \label{BContain}
	\left\{ a_Nx^N +\cdots + a_0 \in \intg[x]: a_N > 0,\ N | [K_\alpha:\rat],\ \mathrm{and}\ |a_i| \leq \binom{N}{i} M(\alpha) \right\}.
\end{equation}
It is theoretically possible to search \eqref{BContain} for polynomials of Mahler measure at most $M(\alpha)$.  However, Lemma \ref{BAlg} still fails to
address the obstacles presented in \eqref{BObstacle}.  After all, the set given by \eqref{BContain} has cardinality
\begin{equation*}
	\sum_{N| [K_\alpha:\rat]} N\prod_{n=2}^N \left(2\binom{N}{n} + 1\right)
\end{equation*}
which is at least
\begin{equation*}
	\exp\left( [K_\alpha:\rat]\log [K_\alpha:\rat]\right).
\end{equation*}
In general, we know only that $\deg\alpha \leq [K_\alpha:\rat] \leq (\deg\alpha)!$ so that an efficient algorithm seems hopeless for $\alpha$ of large degree.  
Furthermore, as we noted in \eqref{BObstacle}, this provides only a list of polynomials, whereas we need their roots.  These polynomials do not necessarily generate
solvable extensions of $\rat$, so it seems far out of reach to attempt to record their roots.

Fortunately, in the case where $\alpha$ has degree $2$, Lemma \ref{BAlg} is efficient enough that we could write our program {\tt B2List(A,B,C,k)}.
Furthermore, the roots of the resulting polynomials are simple to calculate using the quadratic formula.

Our next lemma is a consequence of a result of Dubickas \cite{Dubickas}, finding a positive constant $c = c(\alpha)$ such that
$(M(\alpha),M(\alpha) + c)$ contains no Mahler measures of points in $\rat(\alpha)$.

\begin{lem} \label{DubickasLemma}
	Suppose $\alpha$ is an algebraic number of degree at most $2$.  If $\gamma\in \rat(\alpha)$ satisfies $M(\gamma) > M(\alpha)$ then
	\begin{equation*}
		M(\gamma) > M(\alpha) + \frac{1}{16 M(\alpha)^4}.
	\end{equation*}
\end{lem}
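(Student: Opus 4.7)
The plan is to exhibit $\xi := M(\gamma) - M(\alpha)$ as a nonzero algebraic integer of degree at most $2$ in $\rat(\alpha)$ and conclude via a Galois-norm estimate. First I would check that both $M(\gamma)$ and $M(\alpha)$ are algebraic integers in $\rat(\alpha)$. Indeed, if $\gamma$ has minimal polynomial $A_\gamma x^2 + B_\gamma x + C_\gamma$ over $\intg$, a case analysis on the positions of its roots with respect to the unit circle shows that $M(\gamma)$ is either a positive integer or equals $\tfrac{1}{2}\!\left(|B_\gamma| + \sqrt{B_\gamma^2 - 4A_\gamma C_\gamma}\right)$ in the ``mixed real'' case where one root is strictly inside and the other strictly outside the unit circle. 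In this latter case $M(\gamma)$ is a root of the monic integer polynomial $x^2 - |B_\gamma|x + A_\gamma C_\gamma$, hence an algebraic integer; moreover $\gamma \in \rat(\sqrt d) = \rat(\alpha)$ forces $B_\gamma^2 - 4A_\gamma C_\gamma = k^2 d$ for some positive integer $k$, placing $M(\gamma)$ in $\rat(\alpha)$. The same conclusions hold for $M(\alpha)$, and when $\alpha \in \rat$ the whole discussion reduces to integer gaps.

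Given this, $\xi$ is a nonzero algebraic integer of degree at most $2$ in $\rat(\alpha)$. If $\xi \in \rat$ it is a positive integer and the gap already exceeds $1$. Otherwise, write $\rat(\alpha) = \rat(\sqrt d)$ and let $\xi^\sigma$ denote the nontrivial Galois conjugate of $\xi$. Since Galois is an automorphism, $\xi^\sigma \neq 0$, so the norm $\xi \xi^\sigma$ is a nonzero rational integer, forcing $|\xi \xi^\sigma| \geq 1$.

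It remains to bound $|\xi^\sigma|$ from above. I may assume $M(\gamma) \leq M(\alpha) + 1 \leq 2M(\alpha)$ (else the lemma is trivial). Lemma \ref{BAlg} then gives $|A_\gamma|, |C_\gamma| \leq M(\gamma)$. If $M(\gamma) \in \rat$ then $M(\gamma)^\sigma = M(\gamma) \leq 2M(\alpha)$; if $M(\gamma) \notin \rat$, the relation $M(\gamma) M(\gamma)^\sigma = A_\gamma C_\gamma$ yields $|M(\gamma)^\sigma| \leq M(\gamma) \leq 2M(\alpha)$. The analogous estimate $|M(\alpha)^\sigma| \leq M(\alpha)$ holds, so $|\xi^\sigma| \leq 3M(\alpha)$ by the triangle inequality. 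Combining gives $|\xi| \geq 1/|\xi^\sigma| \geq 1/(3M(\alpha))$, which far exceeds the stated $\tfrac{1}{16 M(\alpha)^4}$. The lemma presumably records this conservative form to match the shape of Dubickas's general-degree result (one expects the analogous bound for degree $d$ to scale like $1/M(\alpha)^{d^2}$).

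The principal obstacle is the case analysis in the first paragraph, where one must verify in each of the four sub-cases (rational, quadratic complex, quadratic real non-mixed, quadratic real mixed) that $M(\gamma)$ takes the claimed form and lies in $\rat(\alpha)$ as an algebraic integer. Once that is settled the Galois-norm estimate is essentially routine and yields the stated explicit constant directly.
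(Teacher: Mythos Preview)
Your argument is correct and in fact yields a sharper bound than the lemma states. It is genuinely different from the paper's proof. The paper invokes Dubickas's general theorem from \cite{Dubickas} as a black box, obtaining
\[
M(\gamma)-M(\alpha) > \bigl(2\,\overline{|M(\alpha)|}\bigr)^{-\deg M(\alpha)\cdot\deg M(\gamma)},
\]
and then carries out essentially the same case analysis you outline (both roots inside, both outside, mixed real) to verify that $\overline{|M(\alpha)|}=M(\alpha)$ and that $\deg M(\alpha),\deg M(\gamma)\leq 2$, arriving at $(2M(\alpha))^{-4}$. You instead unpack the Liouville/norm mechanism directly in this quadratic setting: once $\xi=M(\gamma)-M(\alpha)$ is recognised as a nonzero algebraic integer in $\rat(\alpha)$, the bound $|\xi\,\xi^\sigma|\geq 1$ together with $|\xi^\sigma|\leq 3M(\alpha)$ gives $\xi\geq 1/(3M(\alpha))$. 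This is more self-contained (no external citation) and strictly stronger; the paper's $1/(16M(\alpha)^4)$ is an artifact of quoting a theorem designed for arbitrary degree.

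Two small points of care. First, your explicit formula $M(\gamma)=\tfrac12\bigl(|B_\gamma|+\sqrt{B_\gamma^2-4A_\gamma C_\gamma}\bigr)$ in the mixed real case is not literally correct for every sign pattern of the roots; what you actually need, and what is true, is that $M(\gamma)=\pm A_\gamma\gamma_1$ with $A_\gamma\gamma_1$ a root of the monic integer polynomial $x^2+B_\gamma x+A_\gamma C_\gamma$, hence an algebraic integer in $\rat(\gamma)=\rat(\alpha)$. Second, your appeal to Lemma~\ref{BAlg} for $|A_\gamma|,|C_\gamma|\leq M(\gamma)$ is slightly misplaced (that lemma bounds coefficients by $M(\alpha)$ for $\gamma\in\B(\alpha)$); the inequality you want follows directly from the definition \eqref{SecondMahlerDefinition} of $M$. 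Neither issue affects the validity of the argument.
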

\begin{proof}
	If $\alpha$ is rational then the result is obvious so we will assume that $\deg\alpha = 2$.  Now assume that $\gamma\in \rat(\alpha)$ with $M(\gamma) > M(\alpha)$.
	
	Adopting the notation of \cite{Dubickas}, for an algebraic number $\beta$ with conjugates $\beta_1,\ldots,\beta_N$ over $\rat$, we will write 
	$\overline{|\beta|} = \max\{|\beta_1|,\ldots,|\beta_N|\}$.
	By Theorem 1 of \cite{Dubickas}, we know that
	\begin{equation} \label{DistanceBeginning}
		\log(M(\gamma) - M(\alpha)) > -\deg(M(\gamma))\cdot\log\left(2^{\deg(M(\alpha))}\cdot A\cdot\overline{|M(\alpha)|}^{\deg(M(\alpha))}\right)
	\end{equation}
	where $A$ denotes the leading coefficient of the minimal polynomial of $M(\alpha)$ over $\intg$.  It is well-known that $M(\alpha)$ is an algebraic integer
	so that $A = 1$.  Then simplifying \eqref{DistanceBeginning}, we obtain that
	\begin{equation*}
		M(\gamma) - M(\alpha) > \left( 2\cdot \overline{|M(\alpha)|}\right)^{-\deg M(\alpha)\cdot\deg M(\gamma)}.
	\end{equation*}
	It is easily verified that $M(\alpha)$ and $M(\gamma)$ both belong to $\rat(\alpha)$ so that
	\begin{equation*}
		-\deg M(\alpha)\cdot\deg M(\gamma) \geq -4,
	\end{equation*}
	which yields
	\begin{equation} \label{DistanceEnd}
		M(\gamma) - M(\alpha) > \left( 2\cdot \overline{|M(\alpha)|}\right)^{-4} = \frac{1}{16\cdot \overline{|M(\alpha)|}^4}.
	\end{equation}
	
	We now claim that $\overline{|M(\alpha)|} = M(\alpha)$.  To see this assume that $\alpha_1$ and $\alpha_2$ are the conjugates of $\alpha$ over $\rat$
	and that $\alpha$ has minimal polynomial $ax^2 + bx+c$ over $\intg$, with $a > 0$.  We consider three cases.
	
	If both conjugates of $\alpha$ lie inside the closed unit disk then $M(\alpha) = a\in \intg$.  It now follows that 
	\begin{equation*}
		\overline{|M(\alpha)|} = \overline{|a|} = a = M(\alpha).
	\end{equation*}
	If both conjugates of $\alpha$ lie strictly outide the closed unit disk, then $M(\alpha) = a\cdot|\alpha_1|\cdot|\alpha_2| = |c| \in \intg$.  In this case, we have that
	\begin{equation*}
		\overline{|M(\alpha)|} = \overline{||c||} = |c| = M(\alpha).
	\end{equation*}
	
	Finally, assume without loss of generality that $|\alpha_1| > 1$ and $|\alpha_2| \leq 1$.  Here, we have that $M(\alpha) = a\cdot|\alpha_1|$.
	We know that $a\cdot|\alpha_1|$ is degree $2$ and has conjugate $\pm a\cdot|\alpha_2|$ over $\rat$.  Clearly, we have that
	\begin{equation*}
		|a\cdot|\alpha_1|| > |\pm a\cdot|\alpha_2||
	\end{equation*}
	which implies that
	\begin{equation*}
		\overline{|M(\alpha)|} = \overline{|a\cdot|\alpha_1||} = |a\cdot|\alpha_1|| = a\cdot|\alpha_1| = M(\alpha)
	\end{equation*}
	establishing our claim.  The lemma now follows from \eqref{DistanceEnd}.
\end{proof}

We now proceed with our proof of Theorem \ref{RestrictedListComputing}.

\begin{proof}[Proof of Theorem \ref{RestrictedListComputing}]
	Using \eqref{RationalB}, it is immediately clear that $\B' \subseteq \B_1(\alpha)$ and $\pi(\B') = \pi(\B_1(\alpha))$.  We will now show that
	\begin{equation} \label{B2Comp}
		\B_2(\alpha) = \{\pm\gamma:\gamma\in \B''\}.
	\end{equation}
	
	We begin by taking $\gamma\in \B''$ and showing that $\gamma \in\B_2(\alpha)$.  We know there exists a polynomial $ax^2+bx+c$ that belongs to the output of 
	{\tt B2List(A,B,C,k)} and has $\gamma$ as a root.  Hence, we know that $b^2-4ac$ is not a perfect square and that $\gcd(a,b,c) = 1$, implying that $ax^2+bx+c$ is the 
	minimal polynomial of $\gamma$ over $\intg$.  This means, in particular, that $\deg\gamma = 2$.  Furthermore, $(b^2-4ac)/k$ is a perfect square so that
	\begin{equation*}
		\gamma = \frac{-b \pm \sqrt{k}\cdot\sqrt{(b^2-4ac)/k}}{2a} \in \rat(\sqrt k) = K_\alpha.
	\end{equation*}
	Since $(a,b,c)$ appears in the output of {\tt B2List(A,B,C,k)}, we also conclude that 
	\begin{equation*}
		{\tt Mahler(a,b,c)} < {\tt Mahler(A,B,C)} + 2\cdot 10^{-10}.
	\end{equation*}
	Applying \eqref{EstimateAmount} to {\tt Mahler(a,b,c)}, we obtain that
	\begin{equation*}
		M(\gamma) < {\tt Mahler(A,B,C)} + 3\cdot 10^{-10}.
	\end{equation*}
	Then applying \eqref{EstimateAmount} to {\tt Mahler(A,B,C)}, we find that
	\begin{equation} \label{SoftBound}
		M(\gamma) < M(\alpha) + 4\cdot 10^{-10}.
	\end{equation}
	If $M(\gamma) > M(\alpha)$ then we may apply Lemma \ref{DubickasLemma} to conclude that 
	\begin{equation*}
		M(\gamma) > M(\alpha) + \frac{1}{16 M(\alpha)^4}
	\end{equation*}
	which yields immediately
	\begin{equation*}
		\frac{1}{16 M(\alpha)^4} < 4\cdot 10^{-10}
	\end{equation*}
	when combined with \eqref{SoftBound}.  Some simple manipulations lead to the inequality $100 < M(\alpha)$ which is a contradiction.  Thus, we see that
	$M(\gamma) \leq M(\alpha)$ so that $\gamma\in \B_2(\alpha)$ and it follows immediately that $-\gamma\in\B_2(\alpha)$.
	
	Assume now that $\delta\in \B_2(\alpha)$.  We may select $\gamma\in\{\pm \delta\}$ such that the minimal polynomial $ax^2+bx+c$ of $\gamma$ over $\intg$ has $b \geq 0$.
	We claim that $\gamma\in \B''$.  It is clear that $\gamma\in\B_2(\alpha)$, so that by Lemma \ref{BAlg}, we know that
	\begin{equation*}
		|a| \leq M(\alpha),\ |b| \leq 2M(\alpha),\ \mathrm{and}\ |c| \leq M(\alpha).
	\end{equation*}
	Of course, $a, b$ and $c$ are integers so that
	\begin{equation*}
		|a| \leq \lfloor M(\alpha)\rfloor,\ |b| \leq \lfloor 2M(\alpha) \rfloor\ \mathrm{and}\ |c| \leq \lfloor M(\alpha)\rfloor .
	\end{equation*}
	where $\lfloor M(\alpha)\rfloor $ denotes the floor of $M(\alpha)$.  It follows, once again from \eqref{EstimateAmount}, that
	\begin{equation*}
		M(\alpha) < {\tt Mahler(A,B,C)} + 10^{-10}
	\end{equation*}
	so that
	\begin{equation*}
		|a|, |c| \leq \lfloor {\tt Mahler(A,B,C)} + 10^{-10} \rfloor
	\end{equation*}
	and
	\begin{equation*}
		|b| \leq \lfloor 2\cdot({\tt Mahler(A,B,C)} + 10^{-10}) \rfloor.
	\end{equation*}
	In the notation of our PARI program {\tt B2List(A,B,C,k)}, we have that
	\begin{equation*}
		|a| \leq {\tt floor(M)},\ |b| \leq {\tt floor(2*M)},\ \mathrm{and}\ |c| \leq {\tt floor(M)}.
	\end{equation*}
	We already know that $b \geq 0$, and since $\deg\delta = 2$, we also know that $a \geq 1$.  It now follows that
	\begin{equation*}
		1 \leq a \leq {\tt floor(M)},\ 0 \leq b \leq {\tt floor(2*M)},\ \mathrm{and}\ -{\tt floor(M)} \leq c \leq {\tt floor(M)}.
	\end{equation*}
	We know that $ax^2 +bx + c$ is irreducible so that $\gcd(a,b,c) = 1$ and $b^2-4ac$ is not a perfect square.  Moreover, $\delta\in \rat(\sqrt k)$
	so that $(b^2-4ac)/k$ is a perfect square.  Furthermore, we obtain that
	\begin{align*}
		{\tt Mahler(a,b,c)} & < M(\gamma) + 10^{-10} \\
			& \leq M(\alpha) + 10^{-10} \\
			& < {\tt Mahler(A,B,C)} + 2\cdot 10^{-10}
	\end{align*}
	which implies that ${\tt Mahler(a,b,c)} < {\tt M} + 10^{-10}$ and shows that $(a,b,c)$ must appear in the output of {\tt B2List(A,B,C,k)}.
	This means that $\gamma\in\B''$ so that $\delta\in\{\pm\gamma:\gamma\in \B''\}$ verifying \eqref{B2Comp}.
	
	In view of \eqref{B2Comp}, it is clear that $\B'' \subseteq \B_2(\alpha)$ and $\pi(\B'') = \pi(\B_2(\alpha))$.  This yields immediately
	that $\B \subset \B(\alpha)$ and
	\begin{equation*}
		\pi(B) = \pi(\B'\cup\B'') = \pi(\B')\cup\pi(\B'') = \pi(\B(\alpha))
	\end{equation*}
	completing the proof.
		
\end{proof}

Finally, we must prove Theorem \ref{MBarCalcs}.

\begin{proof}[Proof of Theorem \ref{MBarCalcs}]
	To prove \eqref{GeneralCase}, we assume that $\bar M(\gamma) < M(\gamma)$ so there must exist a root of unity $\zeta$ such that
	$M(\zeta\gamma) < M(\gamma)$.  In view of \eqref{MBarPower}, we conclude that
	\begin{equation} \label{DegreeReduction}
		\deg(\zeta\gamma) < \deg(\gamma) \leq 2
	\end{equation}
	so that $\zeta\gamma\in\rat$.  It follows imediately that $\rat(\zeta) = \rat(\gamma)$ and that $\deg\zeta = 2$.
	It is well-known that there are only $6$ degree $2$ roots of unity, and they are
	\begin{equation*}
		\pm i\ \mathrm{and}\ \frac{\pm 1\pm \sqrt{-3}}{2},
	\end{equation*}
	which implies that $\rat(\zeta) = \rat(i)$ or $\rat(\zeta) = \rat(i\sqrt 3)$.  We now have that $\rat(\gamma) = \rat(i)$ or $\rat(\gamma) = \rat(i \sqrt 3)$, 
	a contradiction.
	
	To prove \eqref{ICase}, first assume that $a= 0$ so that $\gamma = bi$ and $\bar M(\gamma) = \bar M(b)$.  But by \eqref{GeneralCase}, we know that
	$\bar M(b) = M(b)$ verifying that $\bar M(\gamma) = M(b)$.
	
	Now assume that $a \ne 0$ and $\bar M(\gamma) < M(\gamma)$. Once again, there exists a rational number $r$ and a root of unity $\zeta$ such that
	\begin{equation*}
		\zeta\gamma = r
	\end{equation*}
	implying that $\zeta$ must be irrational.  Since $\gamma\in\rat(i)$, we conclude that $\zeta\in\rat(i)$ which establishes that $\zeta = \pm i$.
	This yields $\gamma = \pm ri$ which implies that $a= 0$, another contradiction.
	
	To establish \eqref{3rdCase}, suppose first that $a = b$.  Then we have that
	\begin{equation*}
		\gamma = a + a\sqrt{-3} = 2a\left(\frac{1+\sqrt{-3}}{2}\right)
	\end{equation*}
	But, $(1+\sqrt{-3})/2$ is a root of unity.  Taking $\zeta = 2/(1+\sqrt{-3})$ gives $\deg(\zeta\gamma) = 1$, so that by \eqref{MBarPower}, 
	we obtain that $\bar M(\gamma) = M(\zeta\gamma) = M(2a)$.  Similarly, if $a = -b$ then we obtain
	\begin{equation*}
		\gamma = a - a\sqrt{-3} = 2a\left(\frac{1-\sqrt{-3}}{2}\right)
	\end{equation*}
	so that $\bar M(\gamma) = M(2a)$.
	
	Now we assume that $a\not\in \{b,-b\}$ and set $r = a-b\in\rat\setminus\{0,-2b\}$.  This yields immediately that
	\begin{equation} \label{GammaRep}
		\gamma = r + 2b\left(\frac{1+\sqrt{-3}}{2}\right)
	\end{equation}
	so that $r$ and $2b$ are the unique rational numbers $x$ and $y$ such that $\gamma = x + y(1+\sqrt{-3})/2$.  Once again, we assume that
	$\bar M(\gamma) < M(\gamma)$ so there exists a root of unity $\zeta$ such that $\zeta\gamma = s$ is rational.  Hence we have that
	$\zeta = s\gamma^{-1} \in \rat(\sqrt{-3})$.  
	
	We know that the primitive third and primitive sixth roots of unity are the only irrational roots of unity in $\rat(\sqrt{-3})$.
	As before, $\zeta$ must be irrational so it, as well as $\zeta^{-1}$, lies among these four roots of unity.  That is, we have that
	\begin{equation*}
		\zeta^{-1}\in \left\{\frac{1+\sqrt{-3}}{2}, \frac{1-\sqrt{-3}}{2}, \frac{-1+\sqrt{-3}}{2}, \frac{-1-\sqrt{-3}}{2}\right\}.
	\end{equation*}
	We know that $\gamma = s\zeta^{-1}$ so this yields four cases.
	\begin{enumerate}[(i)]
	\item $\gamma = s\left(\frac{1+\sqrt{-3}}{2}\right)$ so that, by \eqref{GammaRep}, we get $r=0$ and $s=2b$, a contradiction.
	\item $\gamma = s\left(\frac{-1-\sqrt{-3}}{2}\right)$, giving $r=0$ and $s = -2b$, also a contradiciton.
	\item $\gamma = s\left(\frac{-1+\sqrt{-3}}{2}\right) = -s + s\left(\frac{1+\sqrt{-3}}{2}\right)$ giving $r=-s$ and $s = 2b$.  Consequently, we have
				$r=-2b$, again a contradiction.
	\item $\gamma = s\left(\frac{1-\sqrt{-3}}{2}\right) = s - s\left(\frac{1+\sqrt{-3}}{2}\right)$ yielding, once again, $r=-2b$.
	\end{enumerate}
\end{proof}

\section{Acknowledgments}

I wish to thank Professor D.B. Zagier for suggesting that the problem solved in this paper likely follows from \cite{Samuels}.  I also thank
M. Widmer for observing that \cite{LoherMasser} can be used to provide a upper bound on the cardinality of $\B(\alpha)$.  Finally, I thank the
Max-Planck-Institut f\"ur Mathematik in Bonn, Germany where most of this research took place.


\begin{thebibliography}{}
\bibitem{Dobrowolski} E. Dobrowolski, 
  {\it  On a question of Lehmer and the number of irreducible factors of a polynomial},
  Acta Arith.  {\bf 34} (1979),  no. 4, 391--401.
\bibitem{Dubickas} A. Dubickas, {\it Mahler measures close to an integer}, Canad. Math. Bull. {\bf 45}  (2002),  no. 2, 196--203.
\bibitem{DubSmyth}  A. Dubickas and C.J. Smyth, {\it On the metric Mahler measure}, J. Number Theory {\bf 86} (2001), 368--387.
\bibitem{FiliSamuels} P. Fili and C.L. Samuels, {\it On the non-Archimedean metric Mahler measure}, J. Number Theory {\bf 129} (2009), 1698--1708.
\bibitem{Lehmer} D.H. Lehmer, {\it Factorization of certain cyclotomic functions}, Ann. of Math. {\bf 34} (1933), 461--479.
\bibitem{LoherMasser} T. Loher and D.W. Masser, {\it Uniformly counting points of bounded height}, Acta Arith. {\bf 111} (2004), 277--297.
\bibitem{Northcott} D.G. Northcott, {\it An inequality on the theory of arithmetic on algebraic varieties},
	Proc. Cambridge Philos. Soc., {\bf 45} (1949), 502--509.
\bibitem{PARI} PARI/GP, version {\tt 2.3.4}, Bordeaux, 2008, {\it http://pari.math.u-bordeaux.fr/}.
\bibitem{Samuels} C.L. Samuels, {\it The infimum in the metric Mahler measure}, Canad. Math. Bull., to appear.
\end{thebibliography}
\end{document}